\def\eqref#1{equation~\ref{#1}}
\def\1{\bm{1}}
\DeclareMathAlphabet{\mathsfit}{\encodingdefault}{\sfdefault}{m}{sl}
\SetMathAlphabet{\mathsfit}{bold}{\encodingdefault}{\sfdefault}{bx}{n}
\theoremstyle{plain}
\newtheorem{theorem}{Theorem}[section]
\newtheorem{lemma}[theorem]{Lemma}
\theoremstyle{definition}
\theoremstyle{remark}
\newtheorem{remark}[theorem]{Remark}
\renewenvironment{proof}[1][\proofname]{\par
  \vspace{-\topsep}
  \pushQED{\qed}%
  \normalfont
  \topsep0pt \partopsep0pt 
  \trivlist
  \item[\hskip\labelsep
        \itshape
    #1\@addpunct{.}]\ignorespaces
}{%
  \popQED\endtrivlist\@endpefalse
  \addvspace{6pt} 
}
\title{BoGrape: Bayesian optimization over graphs with shortest-path encoded}
\author{%
  Yilin Xie$^\star$ \&  Shiqiang Zhang$^\star$\\
  Department of Computing, Imperial College London, UK \\
  \texttt{\{yilin.xie22, s.zhang21\}@imperial.ac.uk} \\
  \AND
    Jixiang Qing \\
    Department of Computing, Imperial College London, UK \\
    School of Mathematical Sciences, Lancaster University, UK \\
  \texttt{j.qing@imperial.ac.uk} \\
  \AND
  Ruth Misener$^\dagger$ \& Calvin Tsay$^\dagger$ \\
  Department of Computing, Imperial College London, UK\\
  \texttt{\{r.misener, c.tsay\}@imperial.ac.uk} \\
}
\begin{document}

\maketitle

\def\thefootnote{$\star$}\footnotetext{Equal contributions.}
\def\thefootnote{$\dagger$}\footnotetext{Equal contributions. Corresponding authors.}

\begin{abstract}
Graph-structured data are central to many scientific and industrial applications where the goal is to optimize expensive black-box objectives defined over graph structures or node configurations---as seen in molecular design, supply chains, and sensor placement. Bayesian optimization offers a principled approach for such settings, but existing methods largely focus on functions defined over nodes of a fixed graph. Moreover, graph optimization is often approached heuristically, and it remains unclear how to systematically incorporate structural constraints into BO. To address these gaps, we build on shortest-path graph kernels to develop a principled framework for acquisition optimization over unseen graph structures and associated node attributes. Through a novel formulation based on mixed-integer programming, we enable global exploration of the combinatorial domain over graph structures and explicit embedding of problem-specific constraints. We demonstrate that our method, BoGrape, is competitive both on general synthetic benchmarks and representative molecular design case studies with application-specific constraints.
\end{abstract}

\section{Introduction}\label{sec:introduction}
Graph-structured data are playing an emerging role across scientific and industrial fields, giving rise to a series of decision-making problems over graph domains, such as graph-based molecular design \citep{korovina2020chembo, mercado2021graphmolecule, yang2024molecule} and neural architecture search \citep{elsken2019NASsurvey, white2023NASthousand}. Broadly speaking, there are two classes of graph optimization problems \citep{wan2023bayesian}: (i) \emph{optimizing over nodes}, with a given (unknown) graph as the search space and a function over nodes as the objective, and (ii) \emph{optimizing over graphs}, with the entire (constrained) graph domain as the search space and a function over graphs as the objective. The latter case, which this work studies, is usually more challenging since the graph structure itself is optimized, resulting in a complicated combinatorial optimization task.

For both aforementioned scenarios, the objective function can be a black-box, and, when expensive to evaluate, discourages gradient- and population-based methods. These characteristics motivate several works to extend Bayesian optimization (BO) \citep{frazier2018BO, garnett2023BObook} to graph domains \citep{cui2018graph, oh2019combinatorial, wan2023bayesian, liang2024gbosubset} given its potential sample efficiency. 
BO relies on two main components: a surrogate model, e.g., Gaussian processes (GPs), trained on available data to approximate the underlying function, and an acquisition function used to suggest the next sample. To translate BO to graph domains, one needs a surrogate model over graph inputs with suitable uncertainty quantification, leading existing approaches to adapt GPs with various graph kernels \citep{ramachandram2017graphinduced, borovitskiy2021matern, ru2021interpretable, zhi2023graphGP}. However, a general graph BO framework is missing, since existing works either (i) limit the searchable graph set to a given fixed graph \citep{oh2019combinatorial, wan2023bayesian, liang2024gbosubset}, directed labeled graphs \citep{ru2021interpretable,wan2021adversarial, white2021bananas}, unlabeled graphs \citep{cui2018graph}, etc. or (ii) rely on task-specific similarity metrics \citep{kandasamy2018transport}. 

When optimizing over graphs, the search space includes both continuous and discrete variables, thus limiting the choice of optimization techniques. For example, acquisition function optimization in graph BO is mostly performed using evolutionary algorithms \citep{kandasamy2018transport, wan2021adversarial} or sampling \citep{ru2021interpretable, wan2023bayesian}, which are incapable of (i) effectively exploring the search domain, (ii) embedding problem-specific constraints, and (iii) guaranteeing optimality in terms of acquisition function, which is essential for optimization convergence. 
To mitigate these issues, this paper explores mixed-integer programming (MIP) as an alternative to represent an analytic expression of the graph function. The challenges this paper addresses are to manage both the black-box setting and MIP encodings of surrogates for graph BO. 

\begin{figure}[t]
    \centering
    \includegraphics[width=.96\textwidth]{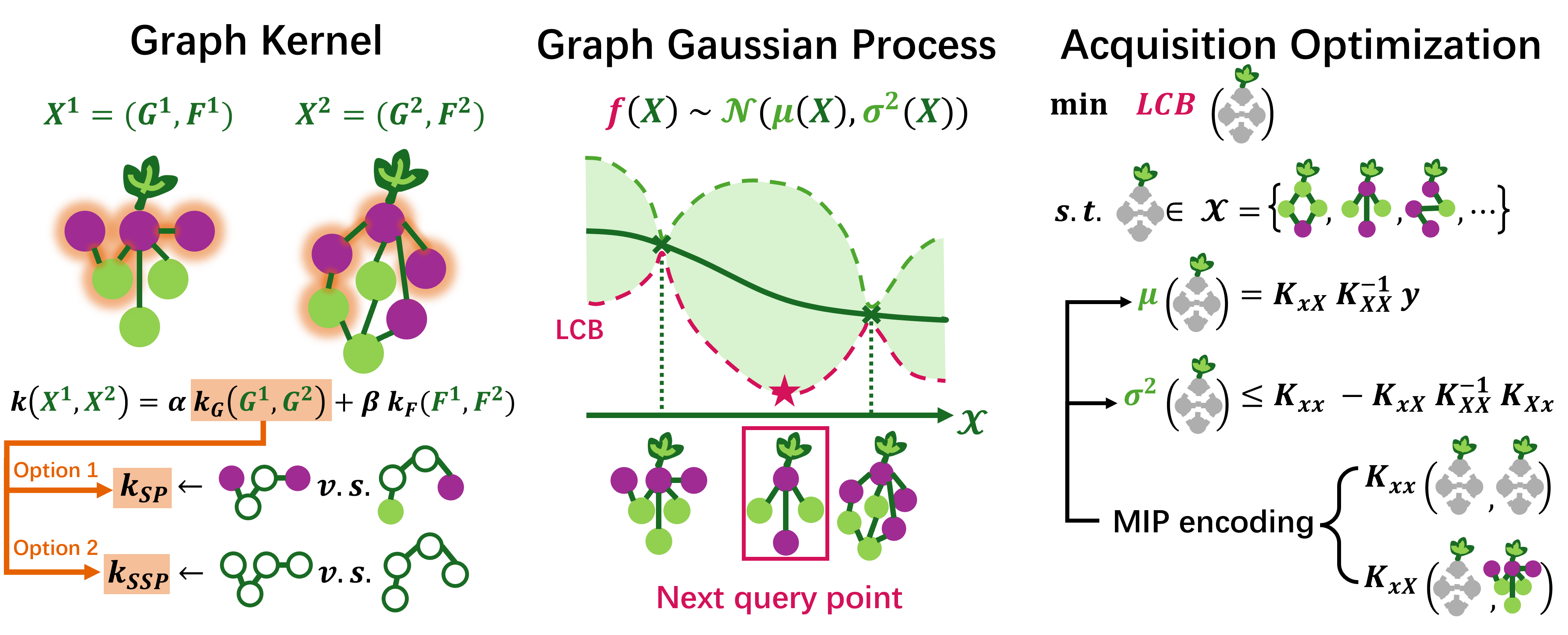}
    \caption{Key components of BoGrape. The \textbf{graph kernel} comprises $k_G$ and $k_F$ on the graph and feature levels, resp. The \textbf{graph GP} is subsequently trained using the chosen kernel, and its posterior is used to build the acquisition function, e.g., LCB. Note that graph GP includes discrete graph domains; the continuous domain is only for illustration purposes. \textbf{Acquisition optimization} is formulated and solved as a MIP using the encoding of shortest paths and graph kernels, giving the next query point.}
    \label{fig:BoGrape_illus}
\end{figure}

Recent advances on applying MIP to optimize trained machine learning (ML) models \citep{ceccon2022omlt,schweidtmann2022optimization,thebelt2022maximizing} suggest pathways to address these challenges. By equivalently encoding surrogates, e.g., GPs \citep{schweidtmann2021deterministic}, trees \citep{misic2020optimization,mistry2021mixed,ammari2023linear}, neural networks (NNs) \citep{fischetti2018deep,anderson2020strong,tsay2021partition,wang2023optimizing}, as constraints in larger decision-making problems, several MIP-based BO methods are proposed, allowing global optimization over mixed-feature domains \citep{thebelt2021entmoot,thebelt2022multi,papalexopoulos2022constrained,xie2024global}. Moreover, some works develop MIP-based techniques to handle optimization problems constrained by graph neural networks (GNNs), with applications to molecular design \citep{zhang2023optimizing,mcdonald2024mixed,zhang2024augmenting} and robustness certification \citep{hojny2024verifying,gaines2025explaining}. However, given the data requirements of GNNs, the computational cost of solving the large resulting MIPs, and the lack of uncertainty quantification, GNNs are impractical surrogates for graph BO. 

This paper proposes BoGrape, a MIP-based graph BO method to optimize functions over connected graphs with attributes. GP surrogates are formulated and optimized using global acquisition function optimization techniques introduced in \citet{xie2024global}. We develop four variants of the classic shortest-path graph kernel \citep{borgwardt2005shortest}, as well as MIP encodings of graph search space, for use in BoGrape. By introducing a representation of the shortest paths as decision variables, the acquisition function optimization is formulated as a MIP with a mixed-feature search space, graph kernel, and relevant problem-specific constraints. While we choose shortest path kernels for their ability to model both undirected and directed graphs, BoGrape focuses on undirected graphs. Figure \ref{fig:BoGrape_illus} illustrates the BoGrape pipeline. Our main contributions include:

\begin{itemize}
    \item We propose graph representations with their corresponding shortest paths, and theoretically prove that the feasible domain of our formulation is equivalent to the graph space consisting of all connected graphs.
    \item We formulate shortest-path graph kernels, node attribute kernels, and GP posterior information based on our graph encoding as MIP constraints, enabling global acquisition optimization.
    \item We provide a principled BO framework over graph spaces from a discrete optimization viewpoint. BoGrape is compatible with problem-specific constraints over graph structures, node attributes, and their interactions.
\end{itemize}


\section{Preliminaries}\label{sec:background}
\subsection{Bayesian optimization (BO)}
BO \citep{frazier2018BO} is a derivative-free optimization framework to iteratively approach the optimum of an expensive-to-evaluate, black-box function. At each iteration, a surrogate model, usually a GP \citep{schulz2018GP}, is trained on the current observed dataset. With the surrogate constructed and trained, an acquisition function is then formulated based on the posterior information, e.g., probability of improvement (PI) \citep{kushner1964new}, expected improvement (EI) \citep{jones1998efficient}, lower confidence bound (LCB) \citep{srinivas2010gaussian}, predictive Entropy search (PES) \citep{hernandez2014predictive}, etc.. Optimizing the acquisition function returns the next query, whose function value is evaluated to form the next data point. This process repeats until meeting a stopping criterion.

\subsection{Global optimization of acquisition functions}
Most theoretical results for regret bounds in BO rely on the global optimization over acquisitions \citep{srinivas2012GPregret}, i.e., they assume the global minimizer/maximizer of the acquisition function is found at each step, which may not be satisfied using gradient- and sample-based optimizers. \citet{xie2024global} introduce PK-MIQP, a global acquisition optimization framework based on mixed-integer quadratic programming (MIQP). The core of PK-MIQP is the piecewise linearization of a stationary or dot-product kernel, e.g., RBF, Mat\'{e}rn, etc., based on which the acquisition optimization is then formulated as an MIQP. PK-MIQP is useful because of its (i) compatibility with various kernels (note the piecewise linearization is unnecessary if the kernel can be expressed linearly), and (ii) theoretical guarantee on regret bounds. We present its formulation for the LCB acquisition function here:
\begin{subequations}\label{eq:LCB_MIQP}
    \begin{align}
        \min&~\mu-\beta_t^{1/2}\sigma &&\gets ~\text{LCB acquisition}\label{eq:LCB_MIQP_obj}\\
        \text{s.t.}&~\mu=K_{xX}K_{XX}^{-1}\bm{y}&&\gets ~\text{GP posterior mean}\label{eq:LCB_MIQP_mean}\\
        &~\sigma^2\le K_{xx}-K_{xX}K_{XX}^{-1}K_{Xx}&&\gets ~\text{GP posterior variance}\label{eq:LCB_MIQP_variance}\\
        &~K_{xX^i}=k(x, X^i),~\forall 1\le i<t&&\gets ~\text{kernel function}\label{eq:LCB_MIQP_kernel}\\
        &~x\in\mathcal X&&\gets ~\text{search space}\label{eq:LCB_MIQP_graph}
    \end{align}
\end{subequations}

\subsection{Shortest-path graph kernels}
Graph kernels extend the concept of kernels to graph domains and are used to measure the similarity between two graphs. Mathematically, a graph kernel $k(\cdot, \cdot):\mathcal G\times\mathcal G\to \mathbb R$ is given by $ k(G,G') = \langle \phi(G),\phi(G')\rangle_\mathcal{H}$, where $\phi: \mathcal{G}\to \mathcal{H}$ is a feature map from graph domain $\mathcal{G}$ to a reproducing kernel Hilbert space $\mathcal{H}$ with inner product $\langle\cdot,\cdot\rangle_{\mathcal H}$ \citep{kriege2020survey}. Past research develops graph kernels using a variety of graph patterns, e.g., neighborhoods, subgraphs, walks, paths. We refer the reader to \citep{vishwanathan2010graph,borgwardt2020graph, kriege2020survey,nikolentzos2021graph} for more details on graph kernels. Several works also use graph kernels to \emph{optimize over nodes} \citep{oh2019combinatorial, borovitskiy2021matern, wan2023bayesian, liang2024gbosubset}, but the involved kernels measure the similarity of two nodes on \textit{one given} graph and do not support \emph{optimizing over graphs} (see Section \ref{sec:introduction} for this distinction). We focus on the shortest-path (SP) kernel \citep{borgwardt2005shortest} in this paper, owing to its ability to (i) handle both directed and undirected graphs, (ii) consider node labels, and (iii) capture the relationship between non-adjacent graph nodes, making it more general than kernels based on subgraph patterns \citep{shervashidze2009graphlet, costa2010neighborhood}. We further discuss the choice of kernels in Appendix \ref{app:choice_of_kernel}. For graph $G$, denote $l_u$ as the label of node $u$, $e_{u,v}$ as the shortest path from $u$ to $v$ (which may not be unique), and $d_{u,v}$ as the shortest distance from node $u$ to $v$ (which is unique). The SP kernel between graphs $G^1=(V^1,E^1)$ and $G^2=(V^2,E^2)$ is defined as:
\begin{equation}\label{eq:classic_SP}
    \begin{aligned}
        k_{\mathit{SP}}(G^1, G^2)=\sum\limits_{u_1,v_1\in V^1,u_2,v_2\in V^2}k_v(l_{u_1},l_{u_2})\cdot k_e(d_{u_1,v_1},d_{u_2,v_2})\cdot k_v(l_{v_1},l_{v_2}).
    \end{aligned}
\end{equation}

where $k_v$ is a kernel comparing node labels and $k_e$ is a kernel comparing path lengths.

\section{Methodology}\label{sec:methodology}
\subsection{Variants of the shortest-path graph kernels}\label{subsec:graph_kernels}
We build on Eq.~(\ref{eq:classic_SP}) and develop variants of the shortest-path kernel. Both $k_v$ and $k_e$ in Eq.~(\ref{eq:classic_SP}) are usually chosen as Dirac kernels, giving the explicit representation of the SP kernel as:
\begin{equation}\label{eq:SP_kernel}\tag{SP}
    \begin{aligned}
        k_\mathit{SP}(G^1,G^2)=\frac{1}{n_1^2n_2^2}\sum\limits_{u_1,v_1\in V^1,u_2,v_2\in V^2}\mathbf 1(l_{u_1}=l_{u_2}, ~d_{u_1,v_1}=d_{u_2,v_2},~l_{v_1}=l_{v_2}),
    \end{aligned}    
\end{equation}
where $n_1^2n_2^2$ is a normalizing coefficient with $n_1,n_2$ as the node numbers of $G^1,G^2$, resp.

Each node may additionally have problem-specific features beyond a single label. From here on, we use $X=(G,F)$ to denote an attributed graph with $G$ as the underlying labeled graph and $F$ as node features. Intuitively, we can compare the features of two nodes instead of labels in $k_v$. However, this could unnecessarily reduce the number of matching paths between two graphs, as requiring identical node features is restrictive and may introduce additional subgraph information into path comparison. Another option is to use a more complicated kernel $k_v$ that measures similarity between features of two nodes, which may significantly increase the computational cost of optimization (similarity is computed for all node pairings). Therefore, we borrow from \citep{cui2018graph} the idea to separate the implicit and explicit information of graphs, i.e., the kernel value between two attributed graphs $X^1,X^2$ becomes:
\begin{equation}\label{eq:general_graph_kernel}
    \begin{aligned}
        k(X^1,X^2) = \alpha\cdot k_G(G^1,G^2)+\beta\cdot k_F(F^1,F^2),
    \end{aligned}
\end{equation}
where $k_G$ is any graph kernel, e.g., (\ref{eq:SP_kernel}), $k_F$ is any kernel over features, and $\alpha,\beta$ are learnable parameters controlling the trade-off between graph similarity and feature similarity.

Since node label is usually included as a node feature and considered in $k_F$ term, and comparing labels in Eq.~(\ref{eq:SP_kernel}) increases the complexity of our upcoming optimization formulations, we further propose a simplified shortest-path (SSP) kernel corresponding to an unlabeled SP kernel:
\begin{equation}\label{eq:SSP_kernel}\tag{SSP}
    \begin{aligned}
        k_\mathit{SSP}(G^1,G^2)
        =\frac{1}{n_1^2n_2^2}\sum\limits_{u_1,v_1\in V^1,u_2,v_2\in V^2}\mathbf 1(d_{u_1,v_1}=d_{u_2,v_2}).
    \end{aligned}
\end{equation}

\begin{lemma}\label{lemma: sp_ssp_pd}
    SP and SSP kernels are positive definite (PD).
\end{lemma}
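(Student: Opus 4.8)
The plan is to prove both kernels are positive semidefinite (the standard meaning of ``positive definite'' in the kernel literature) by the most direct route available: exhibit an explicit feature map $\phi$ for each kernel and write the kernel as an inner product $\langle\phi(\cdot),\phi(\cdot)\rangle_{\mathcal H}$. Any kernel of this form is automatically PD, since for any finite collection of graphs $G^1,\dots,G^m$ and coefficients $c_1,\dots,c_m\in\mathbb R$ we have $\sum_{i,j}c_ic_j\langle\phi(G^i),\phi(G^j)\rangle_{\mathcal H}=\bigl\|\sum_i c_i\phi(G^i)\bigr\|_{\mathcal H}^2\ge 0$. So the whole task reduces to writing down the right $\phi$ and verifying that its inner product reproduces the stated kernel. (Strict definiteness in the eigenvalue sense would additionally require the feature vectors of distinct graphs to be linearly independent, which fails in general; PSD is the property actually needed for a valid GP kernel, so that is what I would establish.)

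For the SSP kernel I would take the feature map indexed by path length, $\phi_\mathit{SSP}(G)=\tfrac{1}{n^2}\bigl(D_0(G),D_1(G),D_2(G),\dots\bigr)\in\ell^2$, which has only finitely many nonzero entries because any shortest path in an $n$-node graph has length at most $n-1$, so $D_s(G)=0$ for $s\ge n$. The only point needing a line of justification is that the truncation at $\min(n_1,n_2)$ in \eqref{eq:SSP_kernel} is without loss of generality: for every $s\ge\min(n_1,n_2)$ the smaller graph already contributes $D_s=0$, so the omitted terms vanish and $k_\mathit{SSP}(G^1,G^2)=\langle\phi_\mathit{SSP}(G^1),\phi_\mathit{SSP}(G^2)\rangle$ with the sum extended over all $s\ge 0$.

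For the SP kernel I would index the feature map by triples $(a,s,b)$ of two node labels and a distance, setting $\phi_\mathit{SP}(G)_{(a,s,b)}=\tfrac{1}{n^2}\,\bigl|\{(u,v)\in V\times V:\ l_u=a,\ l_v=b,\ d_{u,v}=s\}\bigr|$. Expanding $\langle\phi_\mathit{SP}(G^1),\phi_\mathit{SP}(G^2)\rangle$ and summing over $a$, $b$, and $s$ collapses the three indicators via identities such as $\sum_a\mathbf{1}(l_{u_1}=a)\,\mathbf{1}(l_{u_2}=a)=\mathbf{1}(l_{u_1}=l_{u_2})$, recovering exactly the triple-indicator sum in \eqref{eq:SP_kernel}. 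The normalization causes no trouble, since $\tfrac{1}{n_1^2 n_2^2}=c(G^1)c(G^2)$ with $c(G)=1/n^2$ is already absorbed into $\phi$, and scaling a feature map by a per-graph factor preserves the inner-product structure (equivalently, multiplying a PD kernel by $c(G^1)c(G^2)$ keeps it PD).

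The argument is essentially a bookkeeping verification rather than anything deep, so I do not expect a genuine obstacle. The step requiring the most care is confirming that the inner product of the SP feature map reproduces \eqref{eq:SP_kernel} exactly, i.e.\ that the sum over label/distance triples factors cleanly into the product of the three indicators; the accompanying checks that the truncation in \eqref{eq:SSP_kernel} and the per-graph normalization do not break the inner-product representation are routine once the feature maps are chosen as above.
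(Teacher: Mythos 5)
Your proof is correct, but it takes a genuinely different route from the paper. The paper's proof is a two-line citation argument: it invokes \citet{borgwardt2005shortest} for positive definiteness of the SP kernel, and then obtains the SSP kernel as the special case of the SP kernel in which all nodes carry the same label. You instead give a self-contained argument, exhibiting explicit feature maps --- the vector of normalized path-length counts $D_s(G)/n^2$ for \eqref{eq:SSP_kernel}, and the vector of normalized label--distance--label counts $P_{s,l_1,l_2}(G)/n^2$ for \eqref{eq:SP_kernel} --- and verifying that the inner products reproduce the kernels, using $\sum_a \mathbf 1(l_{u_1}=a)\,\mathbf 1(l_{u_2}=a)=\mathbf 1(l_{u_1}=l_{u_2})$ to collapse the indicator products; your handling of the truncation at $\min(n_1,n_2)$ and of the per-graph normalization is also sound. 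Note too that your direction of specialization is reversed: the paper derives SSP from SP, while you prove each independently. What each approach buys: the paper's proof is shorter and defers all work to prior literature, whereas yours is elementary, requires no external result, clarifies that only positive \emph{semi}-definiteness can hold (distinct graphs can share feature vectors), and --- perhaps most usefully --- makes explicit exactly the finite-dimensional linear structure ($D_s$ and $P_{s,l_1,l_2}$ counts) that the paper itself relies on later, both in the remark that SP/SSP are linear kernels after preprocessing and in the MIP encodings of Section \ref{subsec:MIP_kernels}.
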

\begin{proof}
    \citet{borgwardt2005shortest} prove the SP kernel is PD. The SSP kernel is a special case of the SP kernel  where all nodes have the same label, hence is also PD.
\end{proof}

Observe that both the SP and SSP kernels are linear kernels if we pre-compute all shortest paths in each graph and count the number of occurrence for each shortest path length. Such linearity simplifies the optimization step (which still requires the non-trivial representation of shortest paths), but reduces the representation ability of the kernels and limits the maximal rank of the Gram matrix. Motivated by the practically strong performance of exponential kernels such as RBF, Mat\'{e}rn, graph diffusion kernel \citep{oh2019combinatorial}, etc., we propose two nonlinear graph kernels based on SP and SSP kernels:
\begin{subequations}
    \begin{align}
        k_\mathit{ESP}(G^1,G^2)&=\exp(k_\mathit{SP}(G^1,G^2))\label{eq:ESP_kernel}\tag{ESP}/\sigma_k^2,\\
        k_\mathit{ESSP}(G^1,G^2)&=\exp(k_\mathit{SSP}(G^1,G^2))\label{eq:ESSP_kernel}\tag{ESSP}/\sigma_k^2,
    \end{align}
\end{subequations}
where variance $\sigma_k^2$ is added to control the magnitude of kernel value.

\begin{lemma}\label{lemma:exp_sp_ssp_pd}
    ESP and ESSP kernels are PD.
\end{lemma}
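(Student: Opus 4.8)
The plan is to reduce the claim to a standard closure property of positive-definite kernels, using Lemma~\ref{lemma: sp_ssp_pd} as the starting point. Since $k_\mathit{ESP}$ and $k_\mathit{ESSP}$ are obtained from $k_\mathit{SP}$ and $k_\mathit{SSP}$ by the \emph{same} operation (exponentiation followed by division by the positive constant $\sigma_k^2$), a single argument covers both kernels. So it suffices to show that if $k$ is PD, then $\exp(k)/\sigma_k^2$ is PD.

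First I would fix an arbitrary finite collection of attributed graphs and let $M$ be the associated Gram matrix of $k_\mathit{SP}$ (resp.\ $k_\mathit{SSP}$). By Lemma~\ref{lemma: sp_ssp_pd}, $M$ is symmetric positive semidefinite. The Gram matrix of the corresponding exponential kernel is then precisely the entrywise (Hadamard) exponential $\exp^{\circ}(M)$, whose $(i,j)$ entry is $e^{M_{ij}}$. The crux is to show $\exp^{\circ}(M)$ is PSD. I would expand each entry as $e^{M_{ij}} = \sum_{n\ge 0} M_{ij}^n/n!$, so that $\exp^{\circ}(M) = \sum_{n\ge 0} M^{\circ n}/n!$, where $M^{\circ n}$ denotes the $n$-th Hadamard power. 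By the Schur product theorem, the Hadamard product of PSD matrices is PSD, so each $M^{\circ n}$ is PSD; since a nonnegative linear combination of PSD matrices is PSD, every partial sum of the series is PSD.

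Next I would pass to the limit: the PSD cone is closed, and the series converges entrywise, so the entrywise limit $\exp^{\circ}(M)$ is PSD as well. Dividing by the positive constant $\sigma_k^2$ preserves positive semidefiniteness. Because the finite collection of graphs was arbitrary, this establishes that $k_\mathit{ESP}$ and $k_\mathit{ESSP}$ are PD.

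The main obstacle is the limiting step rather than the algebra: one must justify that an entrywise limit of PSD matrices remains PSD (i.e., closedness of the PSD cone) and confirm that the Schur product theorem is correctly invoked for arbitrary Hadamard powers $M^{\circ n}$. Everything else—linearity, nonnegativity of the Taylor coefficients, and scaling by $1/\sigma_k^2>0$—is routine. I would therefore phrase the write-up around the well-known fact that the exponential of a PD kernel is PD, citing the Schur product theorem for the key implication.
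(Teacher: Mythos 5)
Your proof is correct, but it takes a genuinely different route from the paper's. The paper's own proof leans on the \emph{linear} structure of the base kernels: since $k_\mathit{SP}$ and $k_\mathit{SSP}$ can be rewritten as dot products of explicit feature vectors (counts of shortest paths of each length, resp.\ of each length-and-label type), the kernels ESP and ESSP are exponentials of inner products, i.e., ``exponential kernels,'' whose positive definiteness is then invoked as a known result citing \citet{fukumizu2010kernel}. You instead prove the general closure property that the entrywise exponential of \emph{any} PD kernel is PD, starting only from Lemma~\ref{lemma: sp_ssp_pd}: expand the Gram matrix entrywise via the Taylor series, apply the Schur product theorem to each Hadamard power (note the $n=0$ term is the all-ones matrix $\mathbf{1}\mathbf{1}^\top$, which is PSD), use nonnegativity of the Taylor coefficients for the partial sums, and pass to the limit using closedness of the PSD cone under entrywise convergence (for fixed $x$, $x^\top M_k x \to x^\top M x \ge 0$); the scaling by $1/\sigma_k^2 > 0$ is harmless, and this also covers the alternative reading $\exp(k/\sigma_k^2)$ of the definition, since $k/\sigma_k^2$ is again PD. What each approach buys: the paper's proof is shorter but depends on the specific finite-dimensional linear representation of SP/SSP plus an external reference, whereas yours is self-contained (modulo the Schur product theorem) and would remain valid even if the base kernels were replaced by arbitrary PD kernels with no explicit feature map. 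Both establish PD in the usual kernel sense of positive \emph{semi}definite Gram matrices, which is the standard the paper itself uses in Lemma~\ref{lemma: sp_ssp_pd}.
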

\begin{proof}
    SP and SSP kernels can be rewritten into linear forms, so ESP and ESSP are exponential kernels, which are known to be PD \citep{fukumizu2010kernel}.
\end{proof}

\begin{remark}
    The nonlinear kernels introduce additional difficulties for optimization, as discussed later in Section \ref{subsec:MIP_kernels}, but may demonstrate better empirical performance compared to their linear counterparts, owing to increased representation ability.
\end{remark}

\subsection{Global acquisition function optimization}

To extend the prior formulation in Eq.~(\ref{eq:LCB_MIQP}) to optimize LCB acquisition in graph space (see Appendix \ref{app:choice_of_acquisition} for the applicability to other acquisition functions), we replace Eq.~(\ref{eq:LCB_MIQP_kernel}) by our graph kernels in Section \ref{subsec:graph_kernels} and define a combinatorial graph search space for Eq.~(\ref{eq:LCB_MIQP_graph}) as $x=(G,F)\in\mathcal X=\mathcal G\times\mathcal F$. To maintain consistency with the general BO setting, we denote $x=(G, F)$ as the next sample and $X=\{(G^i,F^i), y^i\}_{i=1}^{t-1}$ as the previous samples at the $t$-th iteration. The difference is that now we need to optimize over both the graph domain $G\in\mathcal G$ and the feature domain $F\in\mathcal F$. W.l.o.g., assume that each node has $M$ features $F^i\in\mathbb R^{n(G^i)\times M}$ , and the first $L$ features denote the one-hot encoding of its label, i.e., $\sum_{l\in[L]}F^i_{l}=1$, where $[n]$ denotes set $\{0,1,\dots, n-1\}$. This modified formulation allows: (i) discrete variables, which is a key challenge of graph optimization, (ii) problem-specific constraints over graph domain, and (iii) theoretical guarantees on regret bounds.

A binary adjacency matrix is sufficient to represent the graph domain; however, encoding corresponding shortest-path information (for an unknown graph) is not straightforward and comprises a main technical contribution of this work. We first introduce the formulation of shortest paths in Section \ref{subsec:MIP_SP} and then explicitly derive Eq.~(\ref{eq:LCB_MIQP_kernel}) in Section \ref{subsec:MIP_kernels} for the graph kernels in Section \ref{subsec:graph_kernels}.

\subsection{Encoding of the shortest paths as optimization constraints}\label{subsec:MIP_SP}
For the sake of exposition, we first consider all connected graphs $G$ with fixed size, i.e., node number $n$ is given (Appendix \ref{app:unknown_size} discusses formulations for graphs of unknown size). Table \ref{tab:Var_SP} summarizes the optimization variables. Since our formulations involve constant graph information and their variable counterparts, for each variable $\mathit{Var}$, we use $\mathit{Var}(G)$ to denote its value on a given graph $G$. For example, $d_{u,v}(G)$ is the shortest distance from node $u$ to node $v$ in graph $G$.  

\begin{table}[tp]
    \centering
    \caption{List of variables introduced to represent the shortest path, where $n$ is the number of nodes.}
    \begin{tabular}{ccc}
        \toprule
        variables & type & description \\
        \midrule
         $A_{u,v}\in\{0,1\},~u,v\in [n]$ & binary & the existence of edge from node $u$ to $v$ \\
         $d_{u,v}\in [n],~u,v\in [n]$ & integer & the length of shortest path from node $u$ to $v$ \\
         $\delta_{u,v}^w\in\{0,1\},~u,v,w\in [n]$ & binary & the presence of node $w$ on the shortest path from $u$ to $v$ \\
        \bottomrule
    \end{tabular}
    \label{tab:Var_SP}
\end{table}

If graph $G$ is given, all variables in Table \ref{tab:Var_SP} can be computed using classic shortest-path algorithms, such as the Floyd–Warshall algorithm \citep{floyd1962algorithm}. In graph optimization tasks, however, we need to encode the relationships between these variables as constraints. Motivated by the Floyd–Warshall algorithm, we first present the constraints in Eq.~(\ref{eq:MIP_SP_original}) of Appendix \ref{app:fix_size} and then prove there exists a bijective between the feasible domain given by these constraints and all connected graphs with size $n$. Here we directly give the final encoding of the shortest paths in the following linear MIP (details in Appendix \ref{app:fix_size}):
\begin{equation}\label{eq:MIP_SP}\tag{MIP-SP}
    \left\{
    \begin{aligned} 
        A_{v,v}&=1,~d_{v,v}=0,~\delta_{v,v}^w=\textbf{1}(w=v)&&\forall v,w\in [n]\\
        d_{u,v}&\le 1+n\cdot (1-A_{u,v}),&&\forall u,v\in [n],~u\neq v\\
        d_{u,v}&\ge 2-A_{u,v},&&\forall u,v\in [n],~u\neq v\\
        d_{u,v}&\le d_{u,w}+d_{w,v}-(1-\delta_{u,v}^w),&&\forall u,v,w\in [n]\\
        d_{u,v}&\ge d_{u,w}+d_{w,v}-2n\cdot(1-\delta_{u,v}^w),&&\forall u,v,w\in [n]\\
        \delta_{u,v}^u&=\delta_{u,v}^v=1,&&\forall u,v\in [n],~u\neq v\\
        \sum_{w \in [n]}\delta_{u,v}^w&\le 2+(n-2)\cdot (1-A_{u,v}),&&\forall u,v\in [n],~u\neq v\\
        \sum_{w \in [n]}\delta_{u,v}^w&\ge 2+(1-A_{u,v}),&&\forall u,v\in [n],~u\neq v
    \end{aligned}
    \right.
\end{equation}

\begin{lemma}
    $(A_{u,v}(G),d_{u,v}(G),\delta_{u,v}^w(G))$ is a feasible solution of Eq.~(\ref{eq:MIP_SP}) with size $n=n(G)$ given any connected graph $G$.
\end{lemma}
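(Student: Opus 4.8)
The plan is to prove the statement by direct verification: treating the values $A_{u,v}(G), d_{u,v}(G), \delta_{u,v}^w(G)$ induced by a fixed connected graph $G$ on $n$ nodes as a candidate point, I would check that every constraint in \eqref{eq:MIP_SP} is satisfied. First I would pin down the intended interpretation of the induced quantities, since the claim hinges on it: $A_{u,v}(G)=\mathbf 1((u,v)\in E)$ together with the convention $A_{v,v}(G)=1$; $d_{u,v}(G)$ is the (finite, by connectivity) shortest-path distance; and $\delta_{u,v}^w(G)=\mathbf 1\big(d_{u,v}(G)=d_{u,w}(G)+d_{w,v}(G)\big)$, i.e. the indicator that $w$ lies on some shortest $u$--$v$ path. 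With this reading the constraints of \eqref{eq:MIP_SP} are exactly the big-M linearizations of the disjunctions in \eqref{eq:MIP_SP_original}, so it suffices to verify the latter and confirm that the chosen big-M coefficients ($n$ and $2n$) are valid.

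I would then dispatch the constraints in groups. The \emph{definitional} rows ($A_{v,v}=1$, $d_{v,v}=0$, $\delta_{v,v}^v=1$, $\delta_{v,v}^w=0$ for $w\neq v$, and $\delta_{u,v}^u=\delta_{u,v}^v=1$) follow immediately from the definitions: e.g. $d_{v,v}=0+0$ and, for $w\neq v$, $d_{v,w}+d_{w,v}\ge 1>0=d_{v,v}$, giving $\delta_{v,v}^w=0$. The \emph{adjacency--distance} rows follow from $A_{u,v}=1\Rightarrow d_{u,v}=1$ and $A_{u,v}=0,\,u\neq v\Rightarrow d_{u,v}\ge 2$, with the slack side controlled by the bound $d_{u,v}\le n-1$ that connectivity guarantees. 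The \emph{triangle} rows encoding \eqref{eq:MIP_SP_d} split on $\delta_{u,v}^w$: when $\delta_{u,v}^w=1$ both inequalities hold with equality by definition; when $\delta_{u,v}^w=0$ the triangle inequality $d_{u,v}\le d_{u,w}+d_{w,v}$ together with integrality upgrades to $d_{u,v}\le d_{u,w}+d_{w,v}-1$, while the lower big-M inequality is slack because $d_{u,w}+d_{w,v}\le 2(n-1)<2n$.

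The step I expect to require the most care is the pair of \emph{counting} rows bounding $\sum_{w}\delta_{u,v}^w$, i.e. the linearization of \eqref{eq:MIP_SP_g}. For the upper bound when $A_{u,v}=1$ (so $d_{u,v}=1$) I would argue that any $w\notin\{u,v\}$ has $d_{u,w}+d_{w,v}\ge 2>1$, hence $\delta_{u,v}^w=0$, leaving exactly the two endpoints and $\sum_w\delta_{u,v}^w=2$; the $A_{u,v}=0$ case is vacuous since the sum never exceeds $n$. The lower bound is the genuinely nontrivial direction: when $A_{u,v}=0$ we must exhibit at least three $w$ with $\delta_{u,v}^w=1$, namely the endpoints $u,v$ plus at least one interior vertex. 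This is where I would invoke that $d_{u,v}\ge 2$ forces any shortest $u$--$v$ path to contain an intermediate vertex $w$, which by construction satisfies $d_{u,v}=d_{u,w}+d_{w,v}$ and hence $\delta_{u,v}^w=1$. Assembling these cases verifies all of \eqref{eq:MIP_SP}, establishing feasibility; the directed case is handled identically, reading $d$ as the directed distance and connectivity as strong connectivity to retain the bound $d\le n-1$.
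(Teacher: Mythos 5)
Your proof is correct and takes the same route as the paper, whose entire proof is the single line ``Trivial to verify by definition'': a direct constraint-by-constraint check. Your write-up merely makes explicit what the paper leaves implicit, including the two points that genuinely need care --- reading $\delta_{u,v}^w(G)$ as $\mathbf 1\bigl(d_{u,w}(G)+d_{w,v}(G)=d_{u,v}(G)\bigr)$, i.e.\ membership in \emph{some} shortest path (any other reading would violate the strict-inequality branch of Eq.~\eqref{eq:MIP_SP_d}), and the existence of an interior vertex giving $\sum_w \delta_{u,v}^w\ge 3$ when $A_{u,v}=0$ --- so there is no gap.
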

\begin{proof}
    Trivial to verify by definition.
\end{proof}

\begin{theorem}\label{thm:SP_fix_size}
    Given any $n\in\mathbb Z^+$, for any feasible solution $(A_{u,v},d_{u,v},\delta_{u,v}^w)$ of Eq.~(\ref{eq:MIP_SP}) with size $n$, there exists a unique graph $G$ such that:
    \begin{equation*}
        \begin{aligned}
            (A_{u,v}(G),d_{u,v}(G),\delta_{u,v}^w(G))=(A_{u,v},d_{u,v},\delta_{u,v}^w),
        \end{aligned}
    \end{equation*}
    i.e., there is a bijection between the feasible domain of Eq.~(\ref{eq:MIP_SP}) with size $n$ and the set consisting of all connected graphs with $n$ nodes.
\end{theorem}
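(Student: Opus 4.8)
The plan is to construct, from any feasible triple $(A_{u,v}, d_{u,v}, \delta_{u,v}^w)$ of \eqref{eq:MIP_SP}, the candidate graph $G$ whose off-diagonal adjacency matrix is exactly $A$ — i.e.\ an edge $(u,v)$ exists iff $A_{u,v}=1$ — and then to show that the feasible variables coincide with the genuine shortest-path quantities of this $G$: that $d_{u,v}=d_{u,v}(G)$ and $\delta_{u,v}^w=\delta_{u,v}^w(G)$, and that $G$ is connected. Uniqueness is then immediate, since any graph realizing these values must carry adjacency matrix $A$, and a graph is determined by its adjacency matrix. Combined with the preceding lemma (every connected graph yields a feasible point), the map $G\mapsto(A(G),d(G),\delta(G))$ becomes a well-defined injection (distinct graphs have distinct $A$) that is also surjective, hence a bijection.

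The heart of the argument is proving $d_{u,v}=d_{u,v}(G)$; write $\hat d_{u,v}:=d_{u,v}(G)$ for the true distances. I would first read off the elementary facts forced by the big-M constraints: for $u\neq v$, the bounds $d_{u,v}\le 1+n(1-A_{u,v})$ and $d_{u,v}\ge 2-A_{u,v}$ give $A_{u,v}=1\Rightarrow d_{u,v}=1$ and $A_{u,v}=0\Rightarrow d_{u,v}\ge2$; the pair $d_{u,v}\le d_{u,w}+d_{w,v}-(1-\delta_{u,v}^w)$ and $d_{u,v}\ge d_{u,w}+d_{w,v}-2n(1-\delta_{u,v}^w)$ force $\delta_{u,v}^w=\mathbf 1(d_{u,v}=d_{u,w}+d_{w,v})$ and, in particular, the triangle inequality $d_{u,v}\le d_{u,w}+d_{w,v}$ for all triples. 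The bound $d_{u,v}\le\hat d_{u,v}$ then follows by summing this triangle inequality along a shortest $u$–$v$ path in $G$, each of whose edges contributes $d=1$.

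For the reverse bound I would argue by strong induction on the value $s=d_{u,v}$ that $G$ contains a walk from $u$ to $v$ of length exactly $s$; this simultaneously yields $\hat d_{u,v}\le d_{u,v}$ and, since every $d_{u,v}\in[n]$ is finite, the connectivity of $G$. The base cases $s=0$ (forcing $u=v$, since $d_{u,v}\ge 2-A_{u,v}\ge 1$ for $u\neq v$) and $s=1$ (forcing $A_{u,v}=1$, an edge) are direct. For $s\ge 2$ we have $A_{u,v}=0$, so $\sum_w\delta_{u,v}^w\ge 2+(1-A_{u,v})=3$ together with $\delta_{u,v}^u=\delta_{u,v}^v=1$ produces an intermediate index $w\notin\{u,v\}$ with $\delta_{u,v}^w=1$, i.e.\ $d_{u,v}=d_{u,w}+d_{w,v}$ with both summands in $\{1,\dots,s-1\}$; concatenating the two shorter walks supplied by the induction hypothesis gives the desired walk of length $s$. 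With $d=\hat d$ established, $\delta_{u,v}^w=\mathbf 1(d_{u,v}=d_{u,w}+d_{w,v})=\delta_{u,v}^w(G)$ follows for free.

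I expect the main obstacle to be exactly this inductive construction of a realizing walk, because it is the step that upgrades the purely local big-M inequalities into a global statement about genuine paths; the role of the counting constraint $\sum_w\delta_{u,v}^w\ge 3$ is subtle, as it is precisely what rules out ``ghost'' distance assignments that satisfy the triangle inequality yet admit no decomposing intermediate node. The remaining pieces — the triangle-inequality upper bound, the identification of $\delta$ with $\mathbf 1(d_{u,v}=d_{u,w}+d_{w,v})$, and the uniqueness claim — are routine once this is in place.
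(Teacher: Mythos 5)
Your proof is correct, and it reaches the theorem by a genuinely different decomposition than the paper's. The paper runs a single simultaneous induction on $sd=\min(d_{u,v}(G),d_{u,v})$, proving at every level that both the distance variables and the $\delta$ variables agree with their graph counterparts; this forces two parts (one per side of the $\min$) with two sub-cases each, in which the $\delta$-agreement is re-derived by contradiction every time. You instead decouple the argument into independent pieces: (i) a standalone lemma, read off directly from the big-M constraints, that any feasible point satisfies $\delta_{u,v}^w=\mathbf 1(d_{u,v}=d_{u,w}+d_{w,v})$ and hence the triangle inequality; (ii) the bound $d_{u,v}\le d_{u,v}(G)$ obtained \emph{non-inductively} by telescoping that triangle inequality along a true shortest path of $G$; and (iii) the bound $d_{u,v}(G)\le d_{u,v}$ by strong induction on $s=d_{u,v}$, constructing an explicit walk of length $s$ by splitting at an intermediate node whose existence follows from $\sum_w\delta_{u,v}^w\ge 3$ when $A_{u,v}=0$, after which $\delta$-agreement is a corollary of (i) together with the standard fact that $w$ lies on some shortest $u$--$v$ path iff $d_{u,w}(G)+d_{w,v}(G)=d_{u,v}(G)$. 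Both proofs use the same raw ingredients (triangle inequality, the counting constraint, induction on distance), but your version is more modular: it makes transparent what each constraint is for --- you correctly identify the counting constraint as precisely what excludes ``ghost'' metrics with no decomposing midpoint, something the paper's Part 2 uses only implicitly --- and it collapses the paper's four $\delta$ sub-cases into a one-line consequence. It also delivers connectivity of $G$ explicitly (all $d_{u,v}$ finite plus constructed walks), which the statement requires but the paper's proof obtains only implicitly. What the paper's simultaneous induction buys in exchange is self-containment: it never needs the midpoint characterization of shortest paths as an external fact, since the graph-side and constraint-side quantities are matched level by level.
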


The formulation becomes more complicated when the graph size is unknown (but bounded). Denote $n_0$ and $n$ as the minimal and maximal node numbers, resp., and use $A_{v,v}$ to represent the existence of node $v$. Variables $d_{u,v}$ and $\delta_{u,v}^w$ need to be properly assigned when either $u$ or $v$ does not exist. Moreover, we extend the domain of $d_{u,v}$ from $[n]$ to $[n+1]$ and use $n$ to denote infinity. Eq.~(\ref{eq:MIP_SP_plus}) in Appendix \ref{app:unknown_size} presents the encoding and Theorem \ref{thm:SP_unknown_size} extends Theorem \ref{thm:SP_fix_size} to unknown size.

\begin{theorem}\label{thm:SP_unknown_size}
    There is a bijection between the feasible domain of Eq.~(\ref{eq:MIP_SP_plus}) with size $[n_0,n]$ and all connected graphs with number of nodes in $[n_0, n]$. 
\end{theorem}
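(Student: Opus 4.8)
The plan is to reduce Theorem~\ref{thm:SP_unknown_size} to the fixed-size result Theorem~\ref{thm:SP_fix_size} by conditioning on the set of active nodes. Given any feasible solution $(A_{u,v}, d_{u,v}, \delta_{u,v}^w)$ of Eq.~\eqref{eq:MIP_SP_plus}, the diagonal entries act as node indicators, so I would first define the active set $S = \{v : A_{v,v} = 1\}$ and let $m = |S|$. I would then check that the constraints of Eq.~\eqref{eq:MIP_SP_plus} force $n_0 \le m \le n$, so that the candidate node count is admissible.

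The core step is to show that, restricted to the active nodes, the variables $(A_{u,v}, d_{u,v}, \delta_{u,v}^w)$ with $u,v,w \in S$ satisfy exactly the fixed-size encoding Eq.~\eqref{eq:MIP_SP} of size $m$. The key observation is that inactive nodes decouple from active ones: since an inactive node carries the infinity marker $d = n$, it can never serve as an intermediate node on a shortest path between two active nodes, nor can a shortest distance between active nodes equal $n$. Consequently the Floyd--Warshall-style triangle constraints and the counting constraint $\sum_w \delta_{u,v}^w$ effectively range only over $S$. Theorem~\ref{thm:SP_fix_size} then produces a unique connected graph $G$ on the vertex set $S$ realizing these variables, and $G$ has $m \in [n_0, n]$ nodes as required.

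It remains to show that the inactive portion of the solution is uniquely pinned down, giving injectivity, and that every admissible graph is attained, giving surjectivity. For injectivity I would verify that whenever $u \notin S$ or $v \notin S$, the constraints force $d_{u,v} = n$ and force each $\delta_{u,v}^w$ to its designated dummy value, so the solution is entirely determined by $G$ together with $S$. For surjectivity, given any connected graph with $k \in [n_0, n]$ nodes, I would realize it on an active set, set all inactive variables to their forced values, and confirm feasibility in Eq.~\eqref{eq:MIP_SP_plus}. Combining the two directions yields the claimed bijection.

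The hard part will be the bookkeeping around the infinity/dummy assignments: I must ensure (i) that extending the domain of $d_{u,v}$ to $[n+1]$ with $n$ denoting infinity makes the triangle-inequality constraints among active nodes behave exactly as in the fixed-size encoding, and (ii) that the dummy values for inactive entries are the \emph{unique} feasible choice rather than merely \emph{one} feasible choice, since uniqueness is what upgrades the map to a bijection rather than a mere surjection. A secondary subtlety is reconciling the abstract vertex set of a connected graph with the concrete index set $[n]$; I would resolve this by treating the target as labeled connected graphs on vertex subsets of $[n]$ (or by fixing active indices to a canonical prefix), so that no relabeling ambiguity disrupts injectivity.
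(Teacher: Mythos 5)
Your overall route is the same as the paper's: condition on the active nodes, show that every variable touching an inactive node is forced to its dummy/infinity value, and reduce to the fixed-size statement of Theorem~\ref{thm:SP_fix_size}. However, the point you defer as a ``secondary subtlety'' is not optional bookkeeping---it is the step that makes the statement a bijection, and the paper resolves it in its very first line. Constraint~\eqref{eq:MIP_SP_plus_a}, $A_{v,v}\ge A_{v+1,v+1}$, together with~\eqref{eq:MIP_SP_plus_b}, forces the active nodes to form a canonical prefix: $A_{v,v}=1$ exactly for $v\in[n_1]$ for some $n_1\in[n_0,n]$. So your set $S$ is never an arbitrary subset, and of your two proposed resolutions only ``fixing active indices to a canonical prefix'' is viable---and it is not a modeling choice you need to make, but a consequence of constraints already in Eq.~\eqref{eq:MIP_SP_plus}. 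Your alternative resolution, enlarging the target to labeled connected graphs on arbitrary vertex subsets of $[n]$, is incompatible with the formulation: any solution whose active set is not a prefix violates~\eqref{eq:MIP_SP_plus_a} and is infeasible, so surjectivity onto that larger target fails. Conversely, without the prefix property, injectivity onto the theorem's stated target would fail, since a graph with $m<n$ nodes would admit $\binom{n}{m}$ distinct placements mapping to the same graph.

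A second, smaller obligation hides in your claim that the active variables ``satisfy exactly'' Eq.~\eqref{eq:MIP_SP} of size $m$: after substituting the forced values, the surviving constraints still carry big-M coefficients $n$ (not $m$) and the domain of $d_{u,v}$ is $[n+1]$ (not $[m]$), so they are a priori weaker than Eq.~\eqref{eq:MIP_SP} verbatim. You must exclude the infinity marker $d_{u,v}=n$ for active pairs, i.e., show the active subgraph is connected: a non-adjacent active pair needs some third node on its shortest path by the counting constraint; choosing it inactive forces $d_{u,v}=2n$, outside the domain, and choosing it active lets a minimal-distance cross-component pair produce a strictly smaller one, a contradiction. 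Only then does the induction behind Theorem~\ref{thm:SP_fix_size} pin every active $d_{u,v}$ to the true graph distance, which is at most $m-1$, making the two feasible sets coincide. The paper glosses this with ``one can easily check,'' but since your write-up makes the decoupling of inactive nodes its centerpiece, you should carry this check out explicitly.
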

See Appendix \ref{app:proofs} for proofs of Theorems \ref{thm:SP_fix_size} and \ref{thm:SP_unknown_size}, which guarantee the equivalence of our encoding for directed and strong connected graphs. Appendix \ref{app:undirected_graphs} shows how to further simplify our encoding for undirected graphs. Appendix \ref{app:effectiveness_of_encoding} discusses the effectiveness of our encoding.

\subsection{Encoding of graph kernels as optimization constraints}\label{subsec:MIP_kernels}
We now rewrite Eq.~(\ref{eq:LCB_MIQP_kernel}) using Eq.~(\ref{eq:general_graph_kernel}) as:
\begin{equation*}
    \begin{aligned}
        K_{xX_i}=k(x,X_i)=\alpha\cdot k_G(G,G^i)+\beta\cdot k_F(F,F^i).
    \end{aligned}
\end{equation*}
Given that $k_F$ is independent of the choice of graph kernel $k_G$, and that kernels on continuous features are studied in \citet{xie2024global}, here we focus on formulating $k_G$. See Appendix \ref{app:MIP_features} for respective kernel encoding with binary features.

Formulating $k_G(G,G^i)$ is straightforward for SP and SSP kernels:
\begin{equation*}
    \begin{aligned}
        k_\mathit{SSP}(G,G^i)=\frac{1}{n^2n_i^2}\sum\limits_{u_1,v_1\in [n]}\sum\limits_{u_2,v_2\in[n(G^i)]}d_{u_1,v_1}^{d_{u_2,v_2}(G^i)}=\frac{1}{n^2n_i^2}\sum\limits_{u,v,s\in [n]}D_s(G^i)\cdot d_{u,v}^{s},
    \end{aligned}
\end{equation*}
where $n_i:=n(G^i)$ is the node number of $G^i$, $d_{u,v}^s=\mathbf 1(d_{u,v}=s)$ are indicator variables:
\begin{equation*}
    \sum\limits_{s\in [n+1]}d_{u,v}^s=1,~
    \sum\limits_{s\in [n+1]}s\cdot d_{u,v}^s=d_{u,v},~\forall u,v\in[n],
\end{equation*}
and $D_s(G^i)$ is the number of shortest paths with length $s$ in $G^i$:
\begin{equation*}
    \begin{aligned}
        D_s(G^i)=|\{(u,v)~|~u,v\in [n_i],~d_{u,v}(G^i)=s\}|.
    \end{aligned}
\end{equation*}
\begin{remark} 
    $d_{u,v}^n$ is not used in evaluating the kernel, since it means the shortest path does not exist. 
\end{remark}

Similarly, introducing indicator variables $p_{u,v}^{s,l_1,l_2}$ as:
\begin{equation*}
    \begin{aligned}
        p_{u,v}^{s,l_1,l_2}=\mathbf 1(F_{u,l_1}=1,~d_{u,v}=s,~F_{v,l_2}=1),~\forall u,v,s\in [n],~l_1,l_2\in [L],
    \end{aligned}
\end{equation*}
and counting the numbers of each type of paths in $G^i$:
\begin{equation*}
    \begin{aligned}
        P_{s,l_1,l_2}(G^i)=|\{(u,v)~|~u,v\in [n_i],~l_u(G^i)=l_1,~d_{u,v}(G^i)=s,~l_v(G^i)=l_2\}|,
    \end{aligned}
\end{equation*}
the SP kernel is formulated as:
\begin{equation*}
    \begin{aligned}
        k_\mathit{SP}(G,G^i)=\frac{1}{n^2n_i^2}\sum\limits_{u,v,s\in [n],l_1,l_2\in[L]}P_{s,l_1,l_2}(G^i)\cdot p_{u,v}^{s,l_1,l_2}.
    \end{aligned}
\end{equation*}
There are several ways to handle the exponential kernels: (i) directly use (local) nonlinear solvers, losing optimality guarantees, (ii) piecewise linearize the exponential function following \citet{xie2024global}, or (iii) utilize nonlinear MIP functionalities in established solvers such as Gurobi \citep{gurobi2024} or SCIP \citep{vigerske2018scip}. In our experiments, we choose to use Gurobi, which by default employs a dynamic outer approximation of the exponential function given an error tolerance.

It is noteworthy that $K_{xx}$ in Eq.~(\ref{eq:LCB_MIQP_variance}) is not constant with a non-stationary kernel, making it the most complicated term in the whole formulation. By definition, $k_\mathit{SSP}(G,G)$ has a quadratic form:
\begin{equation*}
    \begin{aligned}
        k_\mathit{SSP}(G,G)=\frac{1}{n^4}\sum\limits_{s\in [n]}D_s^2,
    \end{aligned}
\end{equation*}
where $D_s=\sum_{u,v\in [n]}d_{u,v}^s,~\forall s\in [n]$. Reusing the indicator trick and introducing $D_s^c=\mathbf 1(D_s=c)$, the quadratic form is equivalently linearized as:
\begin{equation*}
    \begin{aligned}
        K_{SSP}(G,G)=\frac{1}{n^4}\sum\limits_{s\in [n], c\in [n^2+1]}c^2\cdot D_s^c,
    \end{aligned}
\end{equation*}
where indicator variables $D_s^c,~\forall s\in [n], c\in [n^2+1]$ should satisfy:
\begin{equation*}
    \begin{aligned}
        \sum\limits_{c\in [n^2+1]}D_s^c=1,~\sum\limits_{c\in [n^2+1]}c\cdot D_s^c=D_s,~\forall s\in [n].
    \end{aligned}
\end{equation*}
Repeating the procedure for the SP kernel, we have:
\begin{equation*}
    \begin{aligned}
        K_\mathit{SP}(G,G)=\frac{1}{n^4}\sum\limits_{s\in [n],l_1,l_2\in [L],c\in [n^2+1]}c^2\cdot P_{s,l_1,l_2}^c, 
    \end{aligned}
\end{equation*}
where indicator variables $P_{s,l_1,l_2}^{c}=\mathbf 1(P_{s,l_1,l_2}=c),~\forall s\in [n],~l_1,l_2\in [L],~c\in [n^2+1]$ satisfy:
\begin{equation*}
    \begin{aligned}
        \sum\limits_{c\in [n^2+1]}P_{s,l_1,l_2}^c=1,~\sum\limits_{c\in [n^2+1]}c\cdot P_{s,l_1,l_2}^c=P_{s,l_1,l_2},~
        \forall s\in [n],~l_1,l_2\in [L].
    \end{aligned}
\end{equation*}

With the above graph GP model and optimization encodings, we have presented all the pieces needed to implement an end-to-end graph BO procedure. Algorithm \ref{alg:main} outlines BoGrape.

\begin{algorithm}[tb]
   \caption{BoGrape at $t$-th iteration.}
   \label{alg:main}
    \begin{algorithmic}[1]
        \State \textbf{Input:} dataset $X=\{(G^i,F^i), y^i\}_{i=1}^{t-1}$, hyperparameter $\beta_t$, graph kernel
        \State \textbf{Model training:} kernel parameters $\alpha,\beta,\sigma_k^2$ \Comment{graph GP fit to $X$}
        \State \textbf{Acquisition formulation:}
        \State \quad represent $K_{xX^i}$ and $K_{xx}$ in Eqs.~(\ref{eq:LCB_MIQP_mean}) -- (\ref{eq:LCB_MIQP_kernel}) \Comment{Section \ref{subsec:MIP_kernels}}
        \State \quad search space $\mathcal X$ in Eq.~(\ref{eq:LCB_MIQP_graph}) \Comment{problem-specific}
        \State \textbf{Optimization:} initialize and solve MIP Eq.~(\ref{eq:LCB_MIQP})
        \Comment{global optimization}
        \State \textbf{Output:} proposed sample $(G^t,F^t)$
    \end{algorithmic}
\end{algorithm}

\section{Experiments}\label{sec:experiments}
All experiments are performed on a 4.2 GHz Intel Core i7-7700K CPU with 16 GB memory. We use GPflow \citep{matthews2017GPflow} to implement GP models, GraKel \citep{siglidis2020grakel} to implement the classic graph kernels, PyG \citep{fey2019PyG} to implement GNNs, and Gurobi \citep{gurobi2024} to solve MIPs. 

There are few synthetic benchmark functions $f:\mathcal{G}\times\mathcal{F}\rightarrow\mathbb R$ for general graph domains since most graph BO works focus on specific types of graphs. W,o.l.g., we consider GNNs as graph functions that maps general labeled connected graph to real values.
We conduct experiments considering two settings: (i) randomly initialized GNNs which serves as random synthetic functions, and (ii) GNNs trained on molecular datasets as graph property predictor in real-world case studies. It is noteworthy that the architectures of GNNs, the training mechanism, and the choice of datasets are not major components of our work, since they merely serve as benchmarks of black-box graph functions.

\subsection{Model performance}\label{subsec:model_performance}

\begin{figure}[t]
    \centering
    \includegraphics[width=\textwidth]{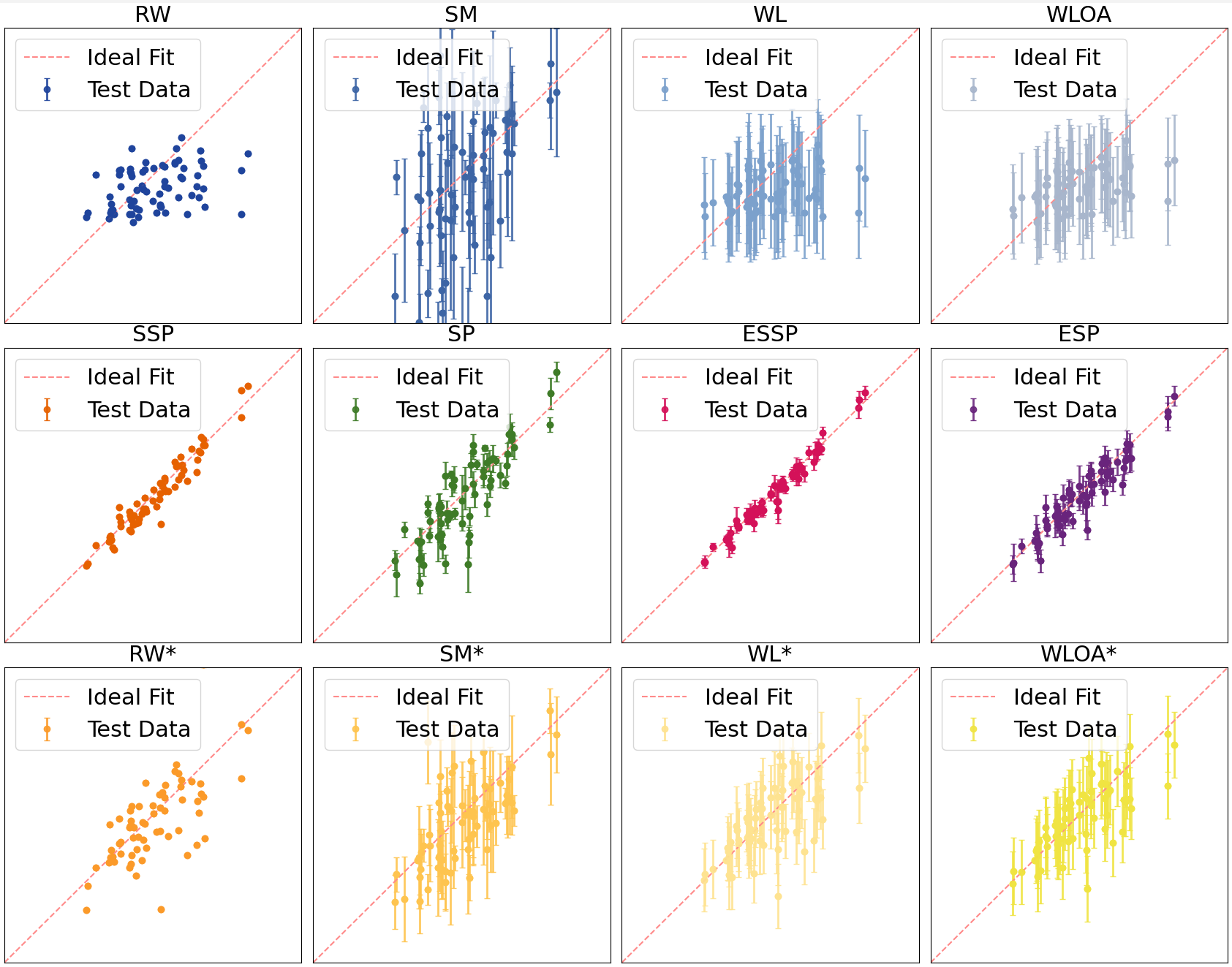}
    \caption{Compare predictive performance of GP with different kernels. * indicates linear combination of given kernel and feature kernel. 100 samples are randomly chosen from the QM7 dataset with various graph sizes, 30 of which are used for training. The predictive mean with one standard deviation (predicted $y$) of the remaining 70 graphs are plotted against their real values (true $y$).}
    \label{fig:model_compare}
 \end{figure} 
 
Before conducting the optimization tasks, we first compare the performance of graph GPs with various graph kernels on randomly sampled molecules from the QM7 dataset \citep{blum2009QM7,rupp2012QM7}. Figure \ref{fig:model_compare} visualizes the results and more detailed discussion can be found in Appendix \ref{app:kernel_performance}. Figure \ref{fig:model_compare} shows that four shortest-path kernels have comparable prediction accuracy against other classic graph kernels, while the two exponential kernels quantify uncertainty more accurately (also supported by Table \ref{table:model_performance_MNLL} of Appendix \ref{app:kernel_performance}). Additionally, the introduction of the additional feature kernel term generally improves the predictive performance of all kernels. For larger graph sizes, Table \ref{table:model_performance_RMSE} of Appendix \ref{app:kernel_performance} shows that the more complicated kernels, i.e., SP and ESP, are generally better at predicting graph properties, since they impose stronger criteria on comparing shortest-paths between two graphs.

\begin{remark}
    See Appendix \ref{app:kernel_performance} for similar illustrations of other graph kernels and more evaluations, and Appendix \ref{app:complexity_analysis} for complexity analysis of different graph kernels.
\end{remark}


\subsection{Optimization of synthetic benchmarks}\label{subsec:synthetic_benchmarks}

\begin{figure}[t!]
     \centering
     \includegraphics[width=0.85\textwidth]{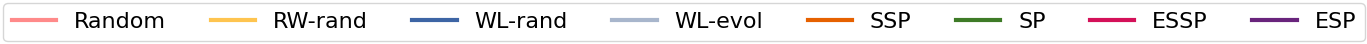}\\
     \vspace{.3mm}
     \begin{subfigure}[b]{0.3\textwidth}
         \centering
         \includegraphics[width=\textwidth]{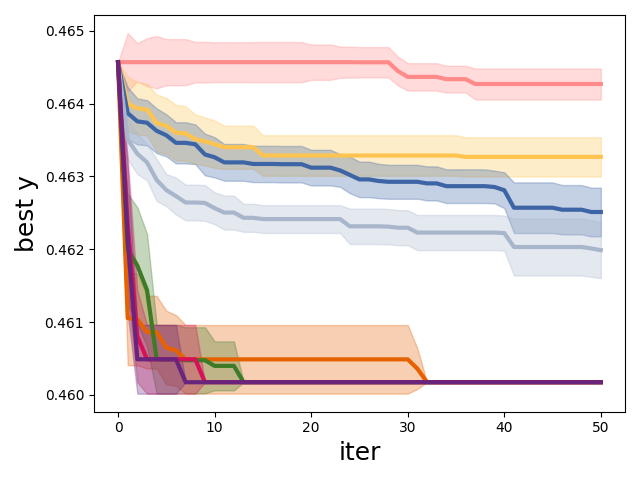}
         \caption{GAT, $N=10$}
         \label{fig:GAT_10}
     \end{subfigure}
     \hfill
    \begin{subfigure}[b]{0.3\textwidth}
         \centering
         \includegraphics[width=\textwidth]{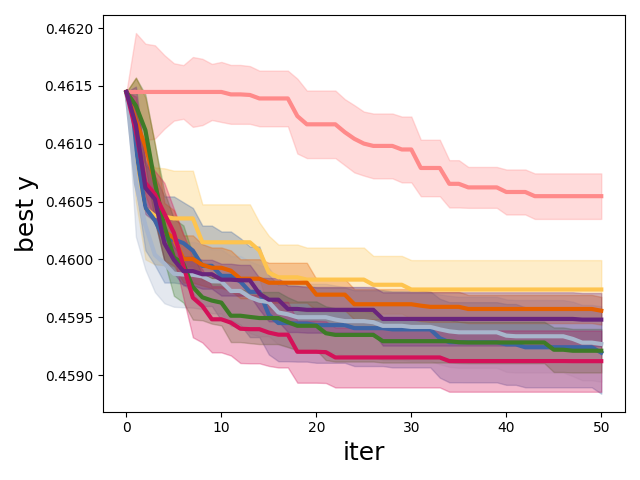}
         \caption{GCN, $N=10$}
         \label{fig:GCN_10}
     \end{subfigure}
     \hfill
     \begin{subfigure}[b]{0.3\textwidth}
         \centering
         \includegraphics[width=\textwidth]{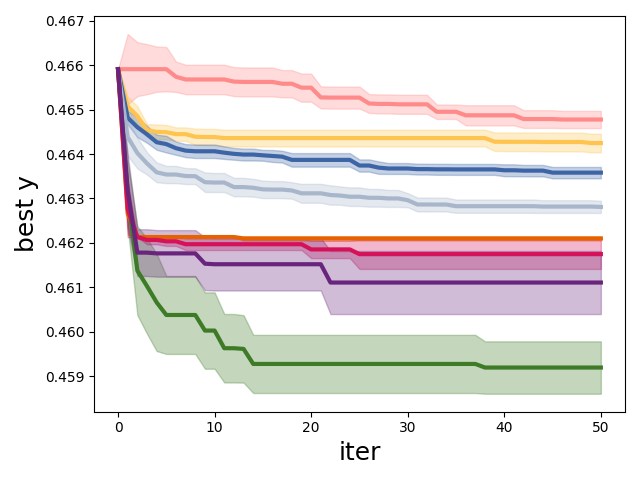}
         \caption{GraphSAGE, $N=10$}
         \label{fig:SAGE_10}
     \end{subfigure}
    \hfill
     \begin{subfigure}[b]{0.3\textwidth}
         \centering
         \includegraphics[width=\textwidth]{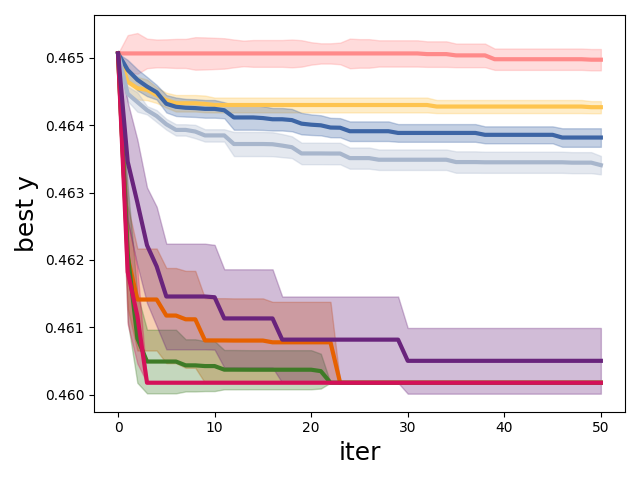}
         \caption{GAT, $N=20$}
         \label{fig:GAT_20}
     \end{subfigure}
     \hfill
     \begin{subfigure}[b]{0.3\textwidth}
         \centering
         \includegraphics[width=\textwidth]{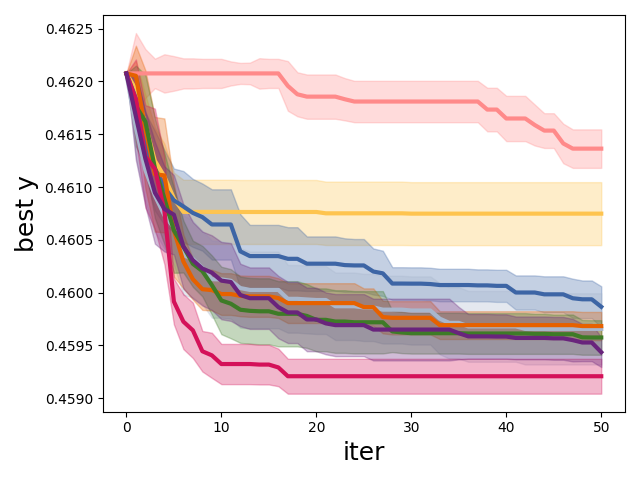}
         \caption{GCN, $N=20$}
         \label{fig:GCN_20}
     \end{subfigure}
     \hfill
     \begin{subfigure}[b]{0.3\textwidth}
         \centering
         \includegraphics[width=\textwidth]{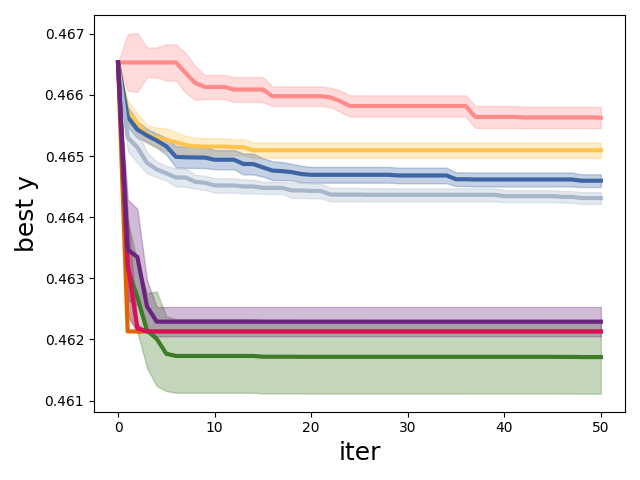}
         \caption{GraphSAGE, $N=20$}
         \label{fig:SAGE_20}
     \end{subfigure}
    \caption{Bayesian optimization results on synthetic benchmarks with $N\in \{10,20\}$. Best objective value is plotted at each iteration. Mean with 0.5 standard deviation over 10 replications is reported.}
    \label{fig:synthetic_results}
\end{figure}

We first evaluate the performance of BoGrape over synthetic benchmarks, i.e., arbitrary functions over graphs. 
Specifically, given their ability as universal approximators, we cover the range of black-box graph functions using randomly initialized GNNs including GAT \citep{velivckovic2017graph}, GCN \citep{kipf2017semi}, and GraphSAGE \citep{hamilton2017inductive}. Each GNN consists of two convolutional layers to learn graph embeddings, and two linear layers. Each hidden layer has 64 features. The search space is the set of all connected, undirected graphs with $N$ nodes, and each node has one-hot features with length $L=5$ as its label. We propose these functions as benchmark problems, given: (i) there are no existing synthetic benchmarks in graph BO literature, (ii) these benchmarks impose neither problem-specific constraints nor assumptions over the graph space (except for connectivity), making them suitable for comparison of a wide class of methods. 
These benchmark functions for graph BO are available at: [link to be added after peer review].

BoGrape is compared against the following baselines: (i) Random: random sampling, i.e., randomly sample one connected graph at each iteration, (ii) RW-rand: use graph GP with random walk (RW) kernel as surrogate, and sampling-based acquisition optimization, i.e., choosing the sample with the best LCB value among 20 random graphs, (iii) WL-rand: use Weisfeiler-Lehma (WL) kernel in RW-rand, (iv) WL-evol: use evolutionary algorithm for acquisition optimization in WL-rand.

\begin{remark}
    WL-rand and WL-evol are adapted from \citet{ru2021interpretable}, which is specifically designed for neural architecture search (NAS). WL-evol could be regarded as the state-of-the-art method in graph BO.
\end{remark}

For each benchmark with size $N=10,20$, we conduct BO with $10$ initial samples and $50$ iterations. When solving Eq.~(\ref{eq:LCB_MIQP}), we observed good solutions to be found early (since more time is spent on proving optimality) and set $600$s as the MIP time limit. As shown in Figure \ref{fig:synthetic_results}, BoGrape with all kernel variants outperforms baselines in most cases. When the graph size is small, SP and ESP perform better, since they are more expressive. For larger sizes, using simpler kernels reduces model complexity and produces better solutions within the given time limit. The trade-off is between the expressiveness of kernel and the complexity of the resulting optimization problem. We further discuss this computational limitation in Appendix \ref{app:limitations} and possible solutions in Appendix \ref{app:scalability} and kernel selection in Appendix \ref{app:kernel_selection}.

\subsection{Real-world case study}\label{subsec:molecular_design}
We next consider optimal molecular design \citep{mcdonald2024mixed,zhang2024augmenting} as a real-world case study. Following \citet{zhang2023optimizing}, we train two GNNs on dataset QM7 \citep{blum2009QM7,rupp2012QM7} and QM9 \citep{ruddigkeit2012QM9,ramakrishnan2014QM9} as oracle predictors, i.e., the functions that we seek to optimize. The additional challenge of this task compared to Section \ref{subsec:synthetic_benchmarks} is that molecules are not arbitrary labeled graphs: the molecular graph should be compatible with atom features. We found these structural constraints to effectively prevent the sampling- and evolutionary-based methods used in Section \ref{subsec:synthetic_benchmarks} from producing feasible solutions. Therefore, we modify Random to only consider randomly generated feasible molecules from Limeade \citep{zhang2025limeade}, and we remove WL-evol from comparison. We also adapt molecular feasibility constraints from Limeade and add them to our MIP formulation to ensure only valid molecules are considered during the optimization. Further related details are given in Appendix \ref{app:experimental_details}. The computational results in Figure \ref{fig:numerical_results} show that BoGrape again generally outperforms baselines regardless of which kernel is used. We discuss this application further in Appendix \ref{app:choice_of_application}. 

\begin{figure}[t!]
     \centering
     \includegraphics[width=0.85\textwidth]{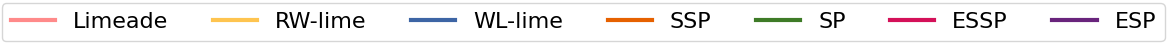}\\
     \vspace{.3mm}
     \begin{subfigure}[b]{0.3\textwidth}
         \centering
         \includegraphics[width=\textwidth]{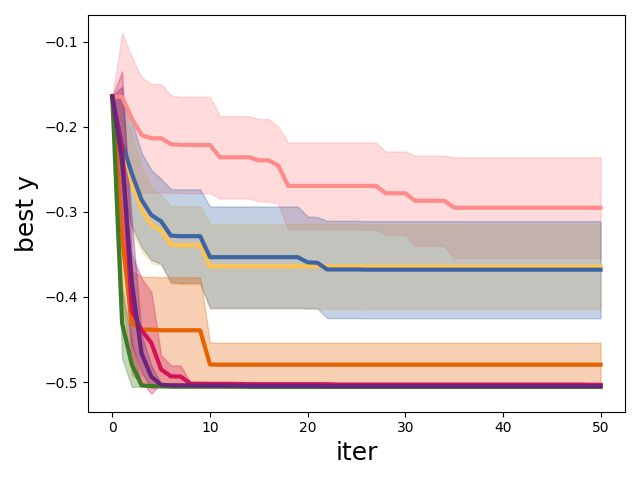}
         \caption{QM7, $N=10$}
         \label{fig:QM7_10}
     \end{subfigure}
     \hfill
    \begin{subfigure}[b]{0.3\textwidth}
         \centering
         \includegraphics[width=\textwidth]{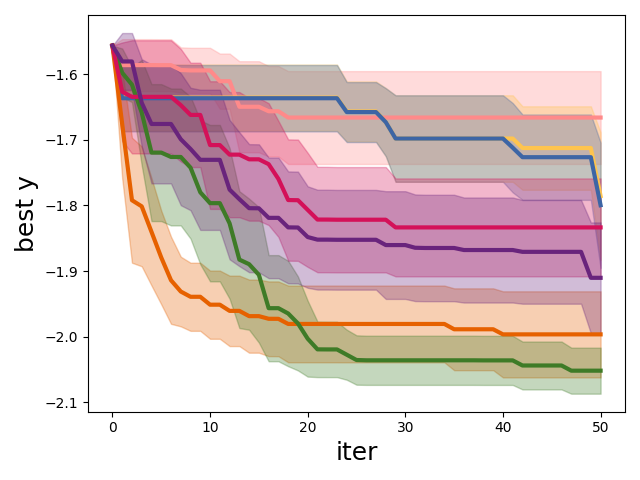}
         \caption{QM7, $N=20$}
         \label{fig:QM7_20}
     \end{subfigure}
     \hfill
     \begin{subfigure}[b]{0.3\textwidth}
         \centering
         \includegraphics[width=\textwidth]{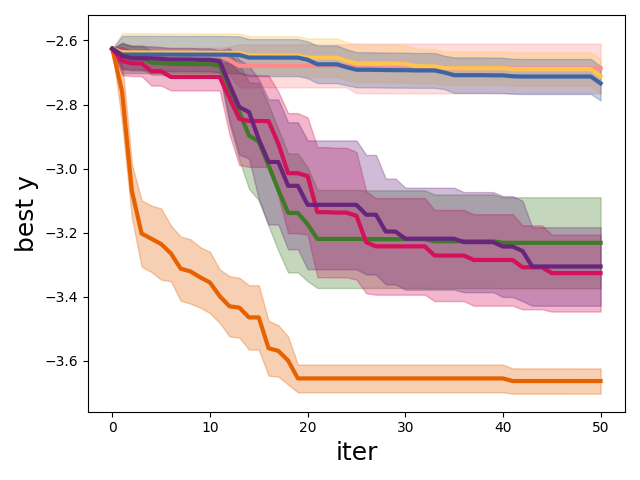}
         \caption{QM7, $N=30$}
         \label{fig:QM7_30}
     \end{subfigure}
    \hfill
     \begin{subfigure}[b]{0.3\textwidth}
         \centering
         \includegraphics[width=\textwidth]{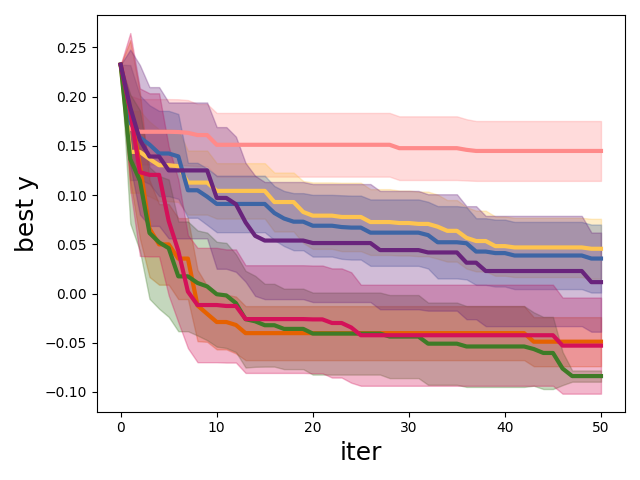}
         \caption{QM9, $N=10$}
         \label{fig:QM9_10}
     \end{subfigure}
     \hfill
     \begin{subfigure}[b]{0.3\textwidth}
         \centering
         \includegraphics[width=\textwidth]{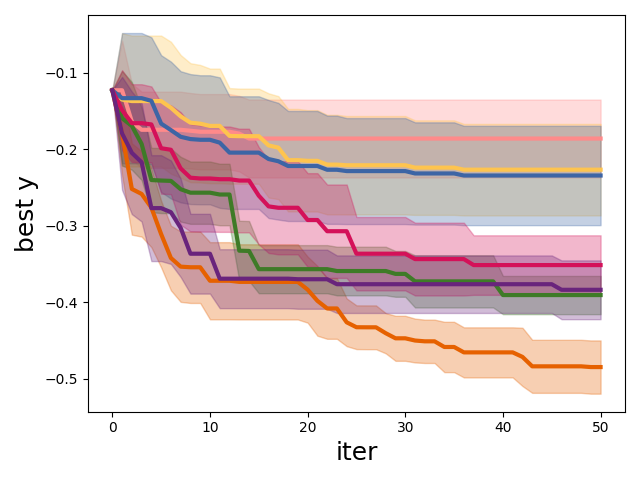}
         \caption{QM9, $N=20$}
         \label{fig:QM9_20}
     \end{subfigure}
     \hfill
     \begin{subfigure}[b]{0.3\textwidth}
         \centering
         \includegraphics[width=\textwidth]{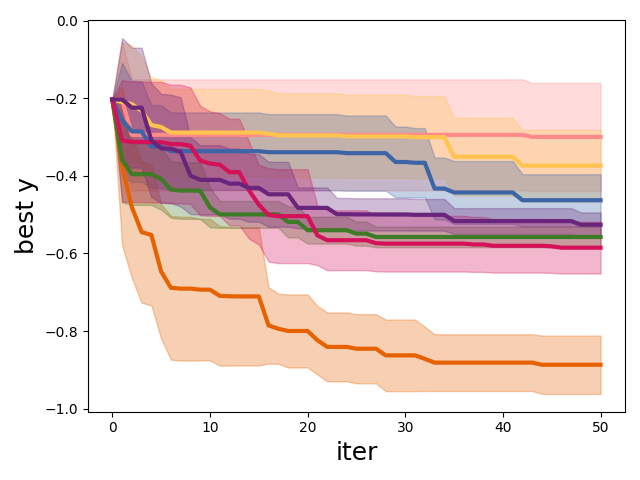}
         \caption{QM9, $N=30$}
         \label{fig:QM9_30}
     \end{subfigure}
    \caption{Bayesian optimization results on QM7 and QM9 with $N\in \{10,20,30\}$. Best objective value is plotted at each iteration. Mean with 0.5 standard deviation over 10 replications is reported.}
    \label{fig:numerical_results}
\end{figure}

\section{Conclusion}\label{sec:conclusion}
This work proposes BoGrape to optimize black-box functions over graphs. Four shortest-path graph kernels are presented and tested on both prediction and Bayesian optimization tasks. The underlying mixed-integer formulation provides a flexible and general platform including mixed-feature search spaces, graph kernels, acquisition functions, and problem-specific constraints. Our results show promising performance and suggest trade-offs between query-efficiency and computational time when choosing a suitable kernel. Future work may further simplify the formulations of BoGrape and relax the requirement on graph connectivity.


\subsubsection*{Acknowledgments}
The authors gratefully acknowledge support from a Department of Computing Scholarship (YX), BASF SE, Ludwigshafen am Rhein (SZ), Engineering and Physical Sciences Research Council [grant numbers EP/W003317/1 and EP/X025292/1] (RM, CT, JQ), Research England's Expanding Excellence in England (E3) fund awarded to MARS (Mathematics for AI in Real-world Systems) at Lancaster University (JQ), a BASF/RAEng Research Chair in Data-Driven Optimisation (RM), a BASF/RAEng Senior Research Fellowship (CT).

\subsection*{Reproducibility statement}
We take the following measures to facilitate the reproducibility of our work. Theoretical contributions are detailed and explained in both Section \ref{sec:methodology} in the main paper and Appendix \ref{app:full_encoding}. Theoretical claims are supported by formal proofs provided in Appendix \ref{app:proofs}. To support the replication of empirical findings, we provide code implementations including our method, synthetic graph functions mentioned in Section \ref{subsec:synthetic_benchmarks} and models used in experiments in the supplementary materials.

\bibliography{ref}
\bibliographystyle{iclr2026_conference}

\newpage
\appendix
\section{Encoding of graph kernels}\label{app:full_encoding}
\subsection{Notations}\label{app:notations}
We provide details for all variables introduced in this paper in Table \ref{tab:Var_full}. Recall that the search domain considered here consists of all connected graphs with node number ranging from $n_0$ to $n$, each node has $M$ binary features with the first $L$ node features as the one-hot encoding of node label.

\begin{table}[h]
    \centering
     \caption{All variables introduced in the optimization formulation for graph kernels.}
     \label{tab:Var_full}
    \small
    \begin{tabular}{ccc}
        \toprule
        Variables & Domain &  Description \\ 
        \midrule
        $A_{u,v},~u,v\in [n]$ & $\{0,1\}$ & the existence of edge from $u$ to $v$ \\
         $d_{u,v},~u,v\in [n]$ & $[n+1]$ & the length of shortest path from $u$ to $v$ \\
         $\delta_{u,v}^w,~u,v,w\in [n]$ & $\{0,1\}$ & if $w$ appears at the shortest path from $u$ to $v$ \\
         \midrule
         $d_{u,v}^s,~u,v\in [n],s\in [n+1]$ & $\{0,1\}$ & indicator: $\mathbf 1(d_{u,v}=s)$ \\
         $D_s,~s\in [n]$ & $[n^2+1]$ & \# shortest paths with length $s$ \\
         $D_s^c,~s\in [n], c\in [n^2+1]$ & $\{0,1\}$ & indicator: $\mathbf 1(D_s=c)$ \\
         \midrule
         $p_{u,v}^{s,l_1,l_2},~u,v,s\in [n],l_1,l_2\in [L]$ & $\{0,1\}$ & indicator: $\mathbf 1(F_{u,l_1}=1,d_{u,v}=s,F_{v,l_2}=1)$ \\
         $P_{s,l_1,l_2},~s\in [n], l_1,l_2\in [L]$ & $[n^2+1]$ & \# shortest paths with length $s$ and labels $l_1,l_2$ \\
         $P_{s,l_1,l_2}^c,~s\in [n],l_1,l_2\in [L],c\in[n^2+1]$ & $\{0,1\}$ & indicator: $\mathbf 1(P_{s,l_1,l_2}=c)$ \\
         \midrule
         $N^m,~m\in [M]$ & $[N+1]$ & sum of $m$-th feature over all nodes \\
         $N_m^c,~m\in [M],c\in [M+1]$ & $\{0,1\}$ & indicator: $\mathbf 1(N_m=c)$ \\
         \bottomrule
    \end{tabular}
    \label{tab:notations}
\end{table}

\subsection{Shortest path encoding for graphs with fixed size}\label{app:fix_size}
We first present necessary conditions that $A_{u,v},d_{u,v},\delta_{u,v}^w$ should satisfy in Eq.~(\ref{eq:MIP_SP_original}):
\begin{subequations}\label{eq:MIP_SP_original}
    \begin{align} 
        A_{v,v}&=1,&&\forall v\in [n]\label{eq:MIP_SP_a}\\
        d_{v,v}&=0,&&\forall v\in [n]\label{eq:MIP_SP_b}\\
        d_{u,v}&
        \begin{cases}
            =1,&A_{u,v}=1\\
            >1,&A_{u,v}=0
        \end{cases},&&\forall u,v\in [n],~u\neq v\label{eq:MIP_SP_c}\\
        d_{u,v}&
        \begin{cases}
        =d_{u,w}+d_{w,v},&\delta_{u,v}^w=1\\
        <d_{u,w}+d_{w,v},&\delta_{u,v}^w=0
        \end{cases},&&\forall u,v\in [n],~u\neq v\label{eq:MIP_SP_d}\\
        \delta_{v,v}^w&=
        \begin{cases}
            1,&w=v\\
            0,&w\neq v
        \end{cases},&&\forall v\in [n]\label{eq:MIP_SP_e}\\
        \delta_{u,v}^u&=\delta_{u,v}^v=1,&&\forall u,v\in [n],~u\neq v\label{eq:MIP_SP_f}\\
        \sum\limits_{w\in [n]}\delta_{u,v}^w&
        \begin{cases}
            =2,&A_{u,v}=1\\
            >2,&A_{u,v}=0
        \end{cases},&&\forall u,v\in [n],~u\neq v\label{eq:MIP_SP_g}
    \end{align}
\end{subequations}

Eq.~(\ref{eq:MIP_SP_original}) restricts $A_{u,v},d_{u,v},\delta_{u,v}^w$ in the following rules:
\begin{itemize}
    \item Eq.~(\ref{eq:MIP_SP_a}) initializes the diagonal elements.
    
    \item Eq.~(\ref{eq:MIP_SP_b}) initializes the shortest distance from $v$ to itself.
    
    \item  Eq.~(\ref{eq:MIP_SP_c}) forces the shortest distance from node $u$ and $v$ be $1$ if edge $u\to v$ exists, and larger than $1$ otherwise.
    
    Rewrite Eq.~(\ref{eq:MIP_SP_c}) as:
    \begin{equation*}
        \begin{aligned}
            d_{u,v}&\le 1+n\cdot (1-A_{u,v}),&&\forall u,v\in n],~u\neq v\\
            d_{u,v}&\ge 2-A_{u,v},&&\forall u,v\in [n],~u\neq v
        \end{aligned}
    \end{equation*}
    where $n$ is a big-M coefficient using $d_{u,v}\le n-1$.
    
    \item Eq.~(\ref{eq:MIP_SP_d}) is the triangle inequality for distance matrix $d$.
    
    Rewrite Eq.~(\ref{eq:MIP_SP_d}) as:
    \begin{equation*}
        \begin{aligned}
            d_{u,v}&\le d_{u,w}+d_{w,v}-(1-\delta_{u,v}^w),&&\forall u,v,w\in [n]\\
            d_{u,v}&\ge d_{u,w}+d_{w,v}-2n\cdot(1-\delta_{u,v}^w),&&\forall u,v,w\in [n]
        \end{aligned}
    \end{equation*}
    where $2n$ is a big-M coefficient since $d_{u,w}+d_{w,v}<2n$.
    
    \item Eq.~(\ref{eq:MIP_SP_e}) initializes $\delta_{v,v}^w$ by definition.
    
    \item Eq.~(\ref{eq:MIP_SP_f}) initializes $\delta_{u,v}^u$ and $\delta_{u,v}^v$ by definition.
    
    \item Eq.~(\ref{eq:MIP_SP_g}) ensures that there is at least one node at the shortest path from node $u$ to $v$ if there is no edge from node $u$ to $v$. Otherwise, no node except for $u$ and $v$ could appear at the shortest path from $u$ to $v$.
    
    Rewrite Eq.~(\ref{eq:MIP_SP_g}) as:
    \begin{equation*}
        \begin{aligned}
            \sum\limits_{w\in [n]}\delta_{u,v}^w&\le 2+(n-2)\cdot (1-A_{u,v}),&&\forall u,v\in [n],~u\neq v\\
            \sum\limits_{w\in [n]}\delta_{u,v}^w&\ge 2+(1-A_{u,v}),&&\forall u,v\in [n],~u\neq v
        \end{aligned}
    \end{equation*}
    where $n-2$ is a big-M coefficient since $\sum_{w\in [n]}\delta_{u,v}^w\le n$.
\end{itemize}

Replacing disjunctive constraints accordingly in Eq.~(\ref{eq:MIP_SP_original}) gives the final formulation Eq.~(\ref{eq:MIP_SP}).

\subsection{Shortest path encoding for graph with unknown size}\label{app:unknown_size}
We extend constraints in Eq.~(\ref{eq:MIP_SP_original}) to handle changeable graph size. Full constraints are as follows:
\begin{subequations}\label{eq:MIP_SP_plus_original}
    \begin{align} 
        A_{v,v}&\ge A_{v+1,v+1},&&\forall v\in [n-1]\label{eq:MIP_SP_plus_a}\\
        \sum\limits_{v\in [n]}A_{v,v}& \ge n_0,&&\label{eq:MIP_SP_plus_b}\\
        2A_{u,v}&\le A_{u,u}+A_{v,v},&&\forall u,v\in [n],~u\neq v\label{eq:MIP_SP_plus_c}\\
        d_{v,v}&=0,&&\forall v\in [n]\label{eq:MIP_SP_plus_d}\\
        d_{u,v}&
        \begin{cases}
            =1,&A_{u,v}=1\\
            >1,&A_{u,v}=0
        \end{cases},&&\forall u,v\in [n],~u\neq v\label{eq:MIP_SP_plus_e}\\
        d_{u,v}&
        \begin{cases}
            <n, &A_{u,u}=A_{v,v}=1\\
            =n, &\min\{A_{u,u},A_{v,v}\}=0
        \end{cases},&&\forall u,v\in [n],~u\neq v\label{eq:MIP_SP_plus_f}\\
        d_{u,v}&
        \begin{cases}
        =d_{u,w}+d_{w,v},&\delta_{u,v}^w=1\\
        <d_{u,w}+d_{w,v},&\delta_{u,v}^w=0
        \end{cases},&&\forall u,v\in [n],~u\neq v\label{eq:MIP_SP_plus_g}\\
        \delta_{v,v}^w&=
        \begin{cases}
            1,&w=v\\
            0,&w\neq v
        \end{cases},&&\forall v\in [n]\label{eq:MIP_SP_plus_h}\\
        \delta_{u,v}^u&=\delta_{u,v}^v=1,&&\forall u,v\in [n],~u\neq v\label{eq:MIP_SP_plus_i}\\
        \sum\limits_{w\in [n]}\delta_{u,v}^w&
        \begin{cases}
            =2,&A_{u,v}=1\\
            >2,&A_{u,v}=0,~A_{u,u}=A_{v,v}=1\\
            =2,&\min\{A_{u,u},A_{v,v}\}=0
        \end{cases},&&\forall u,v\in [n],~u\neq v\label{eq:MIP_SP_plus_j}
    \end{align}
\end{subequations}

Eq.~(\ref{eq:MIP_SP_plus_original}) restricts $A_{u,v},d_{u,v},\delta_{u,v}^w$ in the following rules:
\begin{itemize}
    \item Eq.~(\ref{eq:MIP_SP_plus_a}) forces nodes with smaller indexes exist.
    \item Eq.~(\ref{eq:MIP_SP_plus_b}) gives the lower bound of the number of existed nodes.
    \item Eq.~(\ref{eq:MIP_SP_plus_c}) means that there is no edge from node $u$ to $v$ if any of them does not exist. 
    \item Eq.~(\ref{eq:MIP_SP_plus_d}) initializes the shortest distance from one node to itself, even it does not exist.
    \item Eq.~(\ref{eq:MIP_SP_plus_e}) forces the shortest distance from node $u$ and $v$ be $1$ if there is one edge from $u$ to $v$, and larger that $1$ otherwise.

    Rewrite Eq.~(\ref{eq:MIP_SP_plus_e}) as:
    \begin{equation*}
        \begin{aligned}
            d_{u,v}&\le 1+n\cdot (1-A_{u,v}),&&\forall u,v\in [n],~u\neq v\\
            d_{u,v}&\ge 2-A_{u,v},&&\forall u,v\in [n],~u\neq v
        \end{aligned}
    \end{equation*}
    where $n$ is a big-M coefficient using the fact that $d_{u,v}\le n$.
    
    \item Eq.~(\ref{eq:MIP_SP_plus_f}) sets the shortest distance from node $u$ to $v$ as $n$, i.e., $\infty$, if any of them does not exist. Otherwise, the shortest distance is less than $n$.

    Rewrite Eq.~(\ref{eq:MIP_SP_plus_f}) as:
    \begin{equation*}
        \begin{aligned}
            d_{u,v}&\ge n\cdot(1-A_{u,u}),&&\forall u,v\in [n],~u\neq v\\
            d_{u,v}&\ge n\cdot(1-A_{v,v}),&&\forall u,v\in [n],~u\neq v
        \end{aligned}
    \end{equation*}
    \item Eq.~(\ref{eq:MIP_SP_plus_g}) is the triangle inequality for the distance matrix $d$.

    Rewrite Eq.~(\ref{eq:MIP_SP_plus_g}) as:
    \begin{equation*}
        \begin{aligned}
            d_{u,v}&\le d_{u,w}+d_{w,v}-(1-\delta_{u,v}^w),&&\forall u,v,w\in [n]\\
            d_{u,v}&\ge d_{u,w}+d_{w,v}-2n\cdot(1-\delta_{u,v}^w),&&\forall u,v,w\in [n]
        \end{aligned}
    \end{equation*}
    where $2n$ is a big-M coefficient since $d_{u,w}+d_{w,v}\le 2n$.
    
    \item Eq.~(\ref{eq:MIP_SP_plus_h}) initializes $\delta_{v,v}^w$ by definition, even node $v$ does not exist.
    \item Eq.~(\ref{eq:MIP_SP_plus_i}) initializes $\delta_{u,v}^u$ and $\delta_{u,v}^v$ by definition, even node $u$ or $v$ does not exist.
    \item Eq.~(\ref{eq:MIP_SP_plus_j}) makes sure that there is at least on node at the shortest path from node $u$ to $v$ if there is no edge from node $u$ and $v$ and these two nodes both exist. Otherwise, only $\delta_{u,v}^u$ and $\delta_{u,v}^v$ equal to $1$.
    
    Rewrite Eq.~(\ref{eq:MIP_SP_plus_j}) as:
    \begin{equation*}
        \begin{aligned}
            \sum\limits_{w\in [n]}\delta_{u,v}^w&\le 2+(n-2)\cdot (1-A_{u,v}),&&\forall u,v\in [n],~u\neq v\\
            \sum\limits_{w\in [n]}\delta_{u,v}^w&\le 2+(n-2)\cdot A_{u,u},&&\forall u,v\in [n],~u\neq v\\
            \sum\limits_{w\in [n]}\delta_{u,v}^w&\le 2+(n-2)\cdot A_{v,v},&&\forall u,v\in [n],~u\neq v\\
            \sum\limits_{w\in [n]}\delta_{u,v}^w&\ge A_{u,u}+A_{v,v}+(1-A_{u,v}),&&\forall u,v\in [n],~u\neq v
        \end{aligned}
    \end{equation*}
    where $n-2$ is a big-M coefficient since $\sum_{w\in [n]}\delta_{u,v}^w\le n$.
\end{itemize}

To conclude, the formulation for shortest paths of all connected graphs with at least $n_0$ nodes and at most $n$ nodes is:

\begin{equation}\label{eq:MIP_SP_plus}\tag{MIP-SP-plus}
    \left\{
    \begin{aligned}
        A_{v,v}&\ge A_{v+1,v+1},&&\forall v\in [n-1]\\
        \sum\limits_{v\in [n]}A_{v,v}& \ge n_0,&&\\
        2A_{u,v}&\le A_{u,u}+A_{v,v},&&\forall u,v\in [n],~u\neq v\\
        d_{v,v}&=0,&&\forall v\in [n]\\
        d_{u,v}&\le 1+n\cdot (1-A_{u,v}),&&\forall u,v\in [n],~u\neq v\\
        d_{u,v}&\ge 2-A_{u,v},&&\forall u,v\in [n],~u\neq v\\
        d_{u,v}&\ge n\cdot(1-A_{u,u}),&&\forall u,v\in [n],~u\neq v\\
        d_{u,v}&\ge n\cdot(1-A_{v,v}),&&\forall u,v\in [n],~u\neq v\\
        d_{u,v}&\le d_{u,w}+d_{w,v}-(1-\delta_{u,v}^w),&&\forall u,v,w\in [n]\\
        d_{u,v}&\ge d_{u,w}+d_{w,v}-2n\cdot(1-\delta_{u,v}^w),&&\forall u,v,w\in [n]\\
        \delta_{v,v}^w&=
        \begin{cases}
            1,&w=v\\
            0,&w\neq v
        \end{cases},&&\forall v\in [n]\\
        \delta_{u,v}^u&=\delta_{u,v}^v=1,&&\forall u,v\in [n],~u\neq v\\
        \sum\limits_{w\in [n]}\delta_{u,v}^w&\le 2+(n-2)\cdot (1-A_{u,v}),&&\forall u,v\in [n],~u\neq v\\
        \sum\limits_{w\in [n]}\delta_{u,v}^w&\le 2+(n-2)\cdot A_{u,u},&&\forall u,v\in [n],~u\neq v\\
        \sum\limits_{w\in [n]}\delta_{u,v}^w&\le 2+(n-2)\cdot A_{v,v},&&\forall u,v\in [n],~u\neq v\\
        \sum\limits_{w\in [n]}\delta_{u,v}^w&\ge A_{u,u}+A_{v,v}+(1-A_{u,v}),&&\forall u,v\in [n],~u\neq v
    \end{aligned}
    \right.
\end{equation}

\subsection{Proofs of theorems}\label{app:proofs}
\begin{proof}[\textbf{Proof of Theorem \ref{thm:SP_fix_size}}]
    If such $G$ exists, it is unique since $A_{u,v}$ gives the existence of every edge. Thus it suffices to show that $(d_{u,v}(G),\delta_{u,v}^w(G))=(d_{u,v},\delta_{u,v}^w)$ for $G$ defined with $A_{u,v}$. 
    
    We are going to prove it by induction on the shortest distance $sd$ from node $u$ to $v$ in graph $G$. Specifically, we want to show that for any $0\le sd<n$, and for any pair of $(u,v)$ such that $\min(d_{u,v}(G),d_{u,v})=sd$, we have $d_{u,v}(G)=d_{u,v}$ and $\delta_{u,v}^w(G)=\delta_{u,v}^w,~\forall w\in [n]$.

    For $sd=0$, $\min(d_{u,v}(G),d_{u,v})=0$ if and only if $u=v$. For any $v\in [n]$, it is obvious to have:
    \begin{equation*}
        \begin{aligned}
            d_{v,v}(G)&=0=d_{v,v}\\
            \delta_{v,v}^v(G)&=1=\delta_{v,v}^v\\
            \delta_{v,v}^w(G)&=0=\delta_{v,v}^w,~\forall w\neq v
        \end{aligned}
    \end{equation*}
    
    For $sd=1$, consider every pair $(u,v)$ such that $d_{u,v}(G)=1$, we have $A_{u,v}=A_{u,v}(G)=1$, then it is easy to obtain:
    \begin{equation*}
        \begin{aligned}
            d_{u,v}(G)&=1=d_{u,v}\\
            \delta_{u,v}^w(G)&=1=\delta_{u,v}^w,~\forall w\in\{u,v\}\\
            \delta_{u,v}^w(G)&=0=\delta_{u,v}^w,~\forall w\not\in \{u,v\}
        \end{aligned}
    \end{equation*}
    where $\delta_{u,v}^w=0,~\forall w\not\in \{u,v\}$ since:
    \begin{equation*}
        \begin{aligned}
            \sum\limits_{w\not\in \{u,v\}}\delta_{u,v}^w=\sum\limits_{w\in [n]}\delta_{u,v}^w-\delta_{u,v}^u-\delta_{u,v}^v=0.
        \end{aligned}
    \end{equation*}
    On the contrary, $d_{u,v}=1$ gives $A_{u,v}=1$, thus $A_{u,v}(G)=1$ and $\delta_{u,v}^w(G)=\delta_{u,v}^w,~\forall w$ by definition.
    
    Now assume that for any pair of $(u,v)$ such that $\min(d_{u,v}(G),d_{u,v})\le sd$, we have $d_{u,v}(G)=d_{u,v}$ and $\delta_{u,v}^w(G)=\delta_{u,v}^w,~\forall w$. Since $\delta_{u,v}^w(G)=\delta_{u,v}^w,~\forall w\in \{u,v\}$ always holds by definition, we only consider $w\not\in \{u,v\}$.
    
    \textbf{Part 1}: We first consider every pair of $(u,v)$ such that $d_{u,v}(G)=sd+1$. Since $sd+1\ge 2$, we know that $A_{u,v}=A_{u,v}(G)=0$ and there exists $w\not\in \{u,v\}$ on the shortest path from node $u$ to $v$ in graph $G$. 
    
    \emph{Case 1.1}: For every $w\not\in \{u,v\}$ such that $\delta_{u,v}^w(G)=1$, since $d_{u,w}(G)\le sd$ and $d_{w,v}(G)\le sd$, we have:
    \begin{equation*}
        \begin{aligned}
            d_{u,v}\le d_{u,w}+d_{w,v}=d_{u,w}(G)+d_{w,v}(G)=d_{u,v}(G)=sd+1.
        \end{aligned}
    \end{equation*}
    The equality has to hold, otherwise, $d_{u,v}\le sd$ gives $d_{u,v}(G)=d_{u,v}\le sd$ by assumption. Therefore, $\delta_{u,v}^w=1=\delta_{u,v}^w(G)$.

    \emph{Case 1.2}: For every $w\not\in \{u,v\}$ such that $\delta_{u,v}^w(G)=0$, if $\delta_{u,v}^w=1$, then $d_{u,w}+d_{w,v}=d_{u,v}=sd+1$, which means that $d_{u,w}\le sd$ and $d_{w,v}\le sd$. By assumption, we have $d_{u,w}(G)=d_{u,w},d_{w,v}(G)=d_{w,v}$ and then:
    \begin{equation*}
        \begin{aligned}
            d_{u,w}(G)+d_{w,v}(G)=d_{u,w}+d_{w,v}=d_{u,v}=d_{u,v}(G).
        \end{aligned}
    \end{equation*}
    which contradicts to $\delta_{u,v}^w(G)=0$. Thus $\delta_{u,v}^w=0$.
    
    \textbf{Part 2}: Then we consider every pair of $(u,v)$ such that $d_{u,v}=sd+1$. Similarly, we have $A_{u,v}=A_{u,v}(G)=0$. 
    
    \emph{Case 2.1}: For every $w\not\in \{u,v\}$ such that $\delta_{u,v}^w=1$, since $d_{u,w}\le sd$ and $d_{w,v}\le sd$, we have $d_{u,w}(G)=d_{u,v}$ and $d_{w,v}(G)=d_{w,v}$, then:
    \begin{equation*}
        \begin{aligned}
            d_{u,v}(G)\le d_{u,w}(G)+d_{w,v}(G)=d_{u,w}+d_{w,v}=d_{u,v}=sd+1.
        \end{aligned}
    \end{equation*}
    This equality also has to hold, otherwise, $d_{u,v}(G)\le sd$, by assumption $d_{u,v}=d_{u,v}(G)\le sd$, which is a contradiction.
    
    \emph{Case 2.2}: For every $w\not\in \{u,v\}$ such that $\delta_{u,v}^w=0$, if $\delta_{u,v}^w(G)=1$, then $d_{u,w}(G)=d_{w,v}(G)=d_{u,v}(G)=sd+1$, which means that $d_{u,w}(G)\le sd$ and $d_{w,v}(G)\le sd$. Therefore,
    \begin{equation*}
        \begin{aligned}
            d_{u,w}+d_{w,v}=d_{u,w}(G)+d_{w,v}(G)=d_{u,v}(G)=d_{u,v},
        \end{aligned}
    \end{equation*}
    which contradicts to $\delta_{u,v}=0$.
\end{proof}

\begin{proof}[\textbf{Proof of Theorem \ref{thm:SP_unknown_size}}]
   Fix the node number as $n_1$ with $n_0\le n_1\le n$, Eqs.~(\ref{eq:MIP_SP_plus_a}) -- (\ref{eq:MIP_SP_plus_b}) force:
    \begin{equation*}
        \begin{aligned}
            A_{v,v}=
            \begin{cases}
                1,&v\in [n_1]\\
                0,&v\in [n]\backslash [n_1]
            \end{cases}
        \end{aligned}
    \end{equation*}
    substituting which to other constraints give us:
    \begin{equation*}
        \begin{aligned}
            A_{u,v}&=A_{v,u}=0,&&\forall u\in [n_1],~v\in [n]\backslash [n_1],~u\neq v\\
            d_{v,v}&=0,&&\forall v\in [n]\backslash [n_1]\\
            d_{u,v}&=d_{v,u}=n,&&\forall u\in [n_1],~v\in [n]\backslash [n_1],~u\neq v\\
            \delta_{u,v}^w&=\delta_{v,u}^{w}=
            \begin{cases}
                1,&w\in \{u,v\}\\
                0,&w\not\in\{u,v\}
            \end{cases},&&\forall u\in [n_1],~v\in [n]\backslash [n_1]\\
        \end{aligned}
    \end{equation*}
    One can easily check that all constraints associated with non-existed nodes are satisfied. Removing those constraints turns Eq.~(\ref{eq:MIP_SP_plus}) into Eq.~(\ref{eq:MIP_SP}) with size $n_1$.
\end{proof}

\subsection{Encoding for kernel over binary features}\label{app:MIP_features}
Assume that each graph $G$ has a binary feature matrix $F\in\{0,1\}^{n(G)\times M}$, we need to formulate $k_F(F,F^i)$ and $k_F(F,F)$ properly. $k_F$ could be defined in multiple ways, here we propose a permutational-invariant kernel considering the pair-wise similarity among node features. Given two feature matrices $F^1,F^2$ corresponding to graphs $G^1,G^2$ resp., define $k_F$ as: 
\begin{equation*}
    \begin{aligned}
        k_F(F^1,F^2):=\frac{1}{n_1n_2M}\sum\limits_{v_1\in V^1,v_2\in V^2}F_{v_1}^1\cdot F_{v_2}^2=\frac{1}{n_1n_2M}\sum\limits_{m\in [M]}N_m(F^1)\cdot N_m(F^2),
    \end{aligned}
\end{equation*}
where $N_m(F^i)=\sum\limits_{v\in [n_i]}F_{v,m}^i,~\forall m\in [M]$, and $n_1n_2M$ is the normalized coefficient.

Similar to Section \ref{subsec:MIP_kernels}, we have:
\begin{equation*}
    \begin{aligned}
        k_F(F,F^i)=\frac{1}{nn_iM}\sum\limits_{m\in [M]}N_m(F^i)\cdot N_m,
    \end{aligned}
\end{equation*}
where $N_m=\sum\limits_{v\in [n]}F_{v,m},~\forall m\in [M]$, and:
\begin{equation*}
    \begin{aligned}
        k_F(F,F)=\frac{1}{n^2M}\sum\limits_{m\in [M]}N_m^2=\frac{1}{n^2M}\sum\limits_{m\in [M],c\in [M+1]}c^2\cdot N_m^c,
    \end{aligned}
\end{equation*}
where indicators $N_m^c=\mathbf 1(N_m=c),~\forall m\in [M],~c\in [n+1]$ satisfy:
\begin{equation*}
    \begin{aligned}
        \sum\limits_{c\in [n+1]}N_m^c=1,~\sum\limits_{c\in [n+1]}c\cdot N_m^c=N_m,~\forall m\in [M].
    \end{aligned}
\end{equation*}

\subsection{Simplify path encoding over undirected graphs}\label{app:undirected_graphs}
 For undirected graphs, we first add the following constraints to guarantee symmetry:
    \begin{equation*}
        \left\{
        \begin{aligned}
            A_{u,v}&=A_{v,u},&&\forall u,v\in [n],~u<v\\
            d_{u,v}&=d_{v,u},&&\forall u,v\in [n],~u<v\\
            \delta_{u,v}^w&=\delta_{v,u}^w,&&\forall u,v,w\in [n],~u<v
        \end{aligned}
        \right.
    \end{equation*}
Since the inverse of any shortest path from node $u$ to $v$ is also a shortest path from node $v$ to $u$, for SSP and ESSP kernels, $D_s,~\forall s\in [n]$ are even and we can fix odd indicators as zero:
\begin{equation*}
    \begin{aligned}
        D_s^c=
        \begin{cases}
            1, & c\text{ is even}\\
            0, & c\text{ is odd}
        \end{cases}
        ,~\forall s\in [n],~c\in [n^2+1].
    \end{aligned}
\end{equation*}
Similarly, for SP and ESP kernels, we have:
\begin{equation*}
    \begin{aligned}
        P_{s,l_1,l_2}=P_{s,l_2,l_1},~\forall s\in [n],~f_1,f_2\in [L].
    \end{aligned}
\end{equation*}

\section{Discussion}\label{app:discussion}
\subsection{Choice of graph kernels}\label{app:choice_of_kernel}
Various graph kernels are proposed to better fit graph data. However, none of them could be incorporated as optimization constraints (nor are they designed for this purpose). Thus, current graph BO works mostly use evolutionary algorithms that generate candidates and then evaluate them, where graph kernels are used as graph functions to calculate the posterior mean and variance. The major difference here is that computing $k(G^1,G^2)$ given both $G^1$ and $G^2$ is quite easy, but representing $k(G^1,G^2)$ only given $G^1$ is super challenging since $G^2$ could be any arbitrary graph. BoGrape is built upon our theoretical contributions on encoding shortest paths into decision variables for arbitrary connected graphs. Therefore, it is not that we chose shortest-path kernel first for specific reasons then developed necessary formulations, but that the fundamental advances in graph optimization led us to shortest-path kernels.

\subsection{Choice of acquisition functions}\label{app:choice_of_acquisition}
BoGrape formulates acquisition optimization as a MIP, and LCB is chosen as a representative acquisition function given its popularity in BO and relatively simple form. BoGrape could be easily applied to other acquisitions functions in linear forms w.r.t. posterior mean and variance. For nonlinear acquisition functions, one could either use nonlinear solvers to optimize the resulting MINLP or linearize the acquisition functions. Since the acquisition only appears in objective, all graph-relevant constraints still work as before.

\subsection{Effectiveness of encoding}\label{app:effectiveness_of_encoding}
Different shortest-path algorithms might affect the time complexity when computing the graph kernels, but they should be asymptotically similar since cost is dominated by computation of the shortest distance between any pair of nodes. For example, if we use Dijkstra's algorithm, which is a single-source shortest path algorithm, then we need to repeat it $n$ times and the complexity is $O(n(e+n\log n))$ with $e$ as the number of edges. Most importantly, the choice of shortest path algorithm is irrelevant to our shortest path encoding. Although our encoding is motivated by Floyd's algorithm, all constraints in our encoding are the necessary conditions that the shortest paths should satisfy no matter what algorithm is used. For the optimality of the encoding, our encoding builds a bijection between all connected graphs and all feasible solutions of Eq.~(\ref{eq:MIP_SP}) as shown in Theorem \ref{thm:SP_fix_size}, meaning it is optimal in terms of representations.

\subsection{Complexity analysis}\label{app:complexity_analysis}
The time complexity of computing all shortest paths for a graph with $n$ nodes is $O(n^3)$. When computing the covariance between two graphs (assume they both have $n$ nodes for simplicity), a naive implementation of shortest path kernel is $O(n^4)$, but our implementation is $O(nL^2)$ with $L$ being the number of node labels after storing $P_{s,l_1,l_2}(\cdot)$ as defined in Section \ref{subsec:MIP_kernels}. For other graph kernels, Random Walk (RW) \citep{gartner2003graph} is $O(n^3)$, Subgraph Matching \citep{kriege2012subgraph} is $O(kn^{k+1})$ with $k$ as the size of subgraphs considered, and Weisfeiler-Lehman (WL) \citep{shervashidze2011weisfeiler} is $O(hm)$ with $h$ as the number of iterations and $m$ as the number of edges [7]. There are graph kernels with lower complexity than shortest-path kernels, but the complexity of calculating kernels in graph BO is less important than encoding the graph kernel as optimization constraints.

\subsection{Limitations}\label{app:limitations}
The major limitation of BoGrape (and most MIP-based methods) is the computational complexity. The BoGrape complexity stems from solving the MIP rather than computing kernel values. The tradeoff is: (i) BoGrape represents the whole, unavoidably large, search space precisely, and (ii) solving MIP to global optimality is time-consuming since proving optimality of a solution usually takes much more time than finding this solution. To better demonstrate this tradeoff, we perform an ablation study by varying the MIP time limit among $\{60, 600, 1200\}$ seconds on the molecular design case study on QM7 dataset with graph size $N=10$. As Figure \ref{fig:ablation_time} illustrates, extending the  computational time does not improve BO performance significantly. Nevertheless, Figure \ref{fig:gap} shows that increasing time limits results in a smaller MIPgap, i.e.\ gives the the solver to more time prove a solution's optimality. In other words, finding good feasible solutions is easier (and important for practical BO performance), while closing the MIPgap (important for theoretical BO convergence)  requires more computational effort.

\begin{figure}
    \centering
    \begin{subfigure}[b]{0.45\textwidth}
         \centering
         \includegraphics[width=\textwidth]{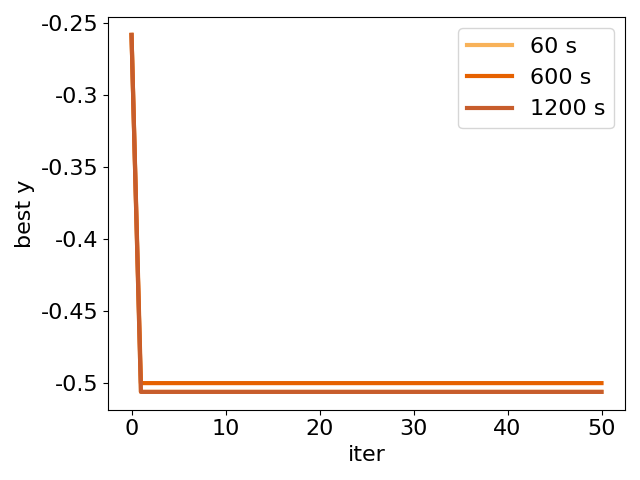}
         \caption{SSP kernel}
         \label{fig:SSP_time}
     \end{subfigure}
     \hfill
     \begin{subfigure}[b]{0.45\textwidth}
         \centering
         \includegraphics[width=\textwidth]{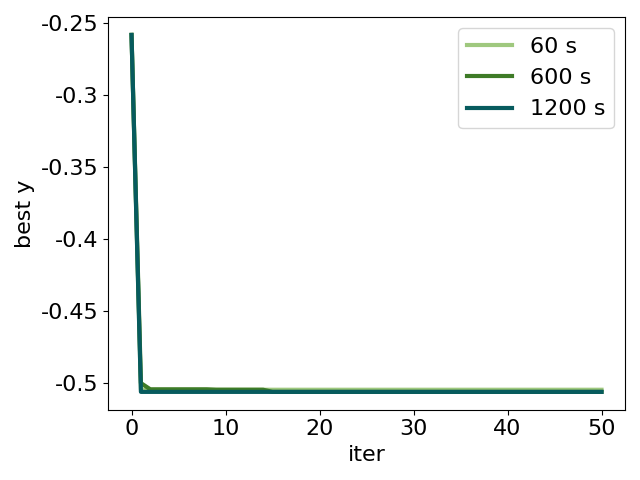}
         \caption{SP kernel}
         \label{fig:SP_time}
     \end{subfigure}
     \hfill
     \begin{subfigure}[b]{0.45\textwidth}
         \centering
         \includegraphics[width=\textwidth]{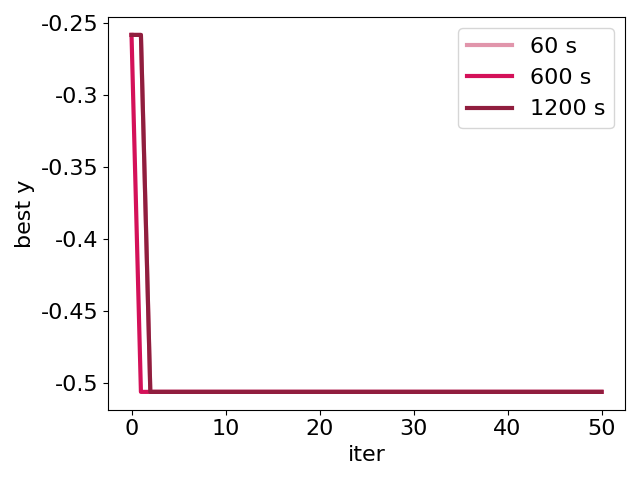}
         \caption{ESSP kernel}
         \label{fig:ESSP_time}
     \end{subfigure}
     \hfill
     \begin{subfigure}[b]{0.45\textwidth}
         \centering
         \includegraphics[width=\textwidth]{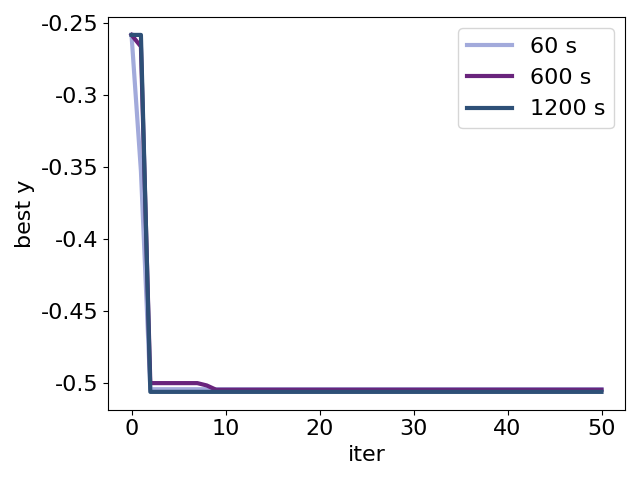}
         \caption{ESP kernel}
         \label{fig:ESP_time}
     \end{subfigure}
    \caption{Performance of varying the time limit for BoGrape over QM7 datasets with graph size $N=10$. Best objective is plotted at each iteration. Mean with 0.5 standard deviation over 10 replications is reported.}
    \label{fig:ablation_time}
\end{figure}

\begin{figure}
    \centering
    \includegraphics[width=0.5\textwidth]{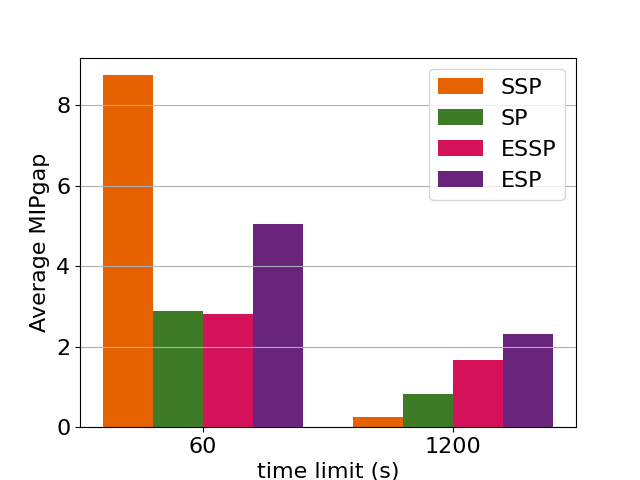}
    \caption{Comparison of the average MIPgap over all iterations when varying the time limit. Experiment conducted on QM7 dataset with graph size $N=10$.}
    \label{fig:gap}
 \end{figure} 

\subsection{Scalibility of BoGrape}\label{app:scalability}
Scalability issues always exist for combinatorial optimization, since the search space grows quickly. For BoGrape, there are several ways to improve scalability: (i) reduce the search space, e.g., only consider graphs that are similar to previous graphs in a trust-region fashion (similar in spirit to mutation over existing samples in evolutionary algorithms \citep{ru2021interpretable}, or adversarial perturbations with limited budgets \citep{wan2021adversarial}), (ii) limit the solving time as we did in experiments, letting the MIP solver return the current best solution, (iii) develop computational heuristics for specific problems to identify promising candidates earlier, (iv) decompose large graphs into functional groups and optimize the graph structure over groups instead of nodes. Note that (iv) is frequently applied in graph tasks, e.g., cell-based neural architecture search \citep{wan2022on}, fragment-based molecular design \citep{zhang2024augmenting}, etc..

\subsection{Choice of application}\label{app:choice_of_application}
We choose the optimal molecular design task since (i) molecules can be represented as attributed, connected graphs, (ii) molecular properties, either measured or predicted, are suitable functions over graphs, and (iii) the MIP-based framework for molecular design is well-established. The baselines used in our experiments are less tailored to molecular design, and there are definitely more advanced methods. But the purpose of this case study is not showing BoGrape is a state-of-the-art method in molecular design, but investigating this problem from a constrained discrete optimization perspective. Meanwhile, although molecular design is a promising and important application area for BO \citep{paulson2024bayesian}, our proposed BoGrape procedure is general for any setting with functions defined over connected graphs. 

\subsection{Kernel selection}\label{app:kernel_selection}
Kernel selection is an interesting question explored in BoGrape. As discussed in Section \ref{subsec:synthetic_benchmarks}, there is a trade-off between the kernel's expressiveness and the complexity of resulting  optimization problems. With sufficient computational resources, more expressive kernels like ESP is preferred. But simpler kernels like SSP yield optimization problems that are easier to solve. As the graph size increases, linear kernels usually achieve better performance due to the overhead associated with formulating exponential kernels.

\section{Additional numerical results}\label{app:additional_results}
\subsection{Kernel performance}\label{app:kernel_performance}
Besides four shortest-path graph kernels, we also test the performance of several classic graph kernels, including Random Walk (RW) \citep{gartner2003graph}, Subgraph Matching (SM) \citep{kriege2012subgraph}, Weisfeiler-Lehman (WL) \citep{shervashidze2011weisfeiler}, and Weisfeiler-Lehman Optimal Assignment (WLOA) \citep{kriege2016valid} kernels. To justify the effectiveness of the feature component in Eq.~(\ref{eq:general_graph_kernel}), we also test the combination of these four kernels with the same feature kernel used in shortest-path kernels. All GPs are trained by maximizing the log marginal likelihood. During GP training, we set bounds for kernel parameters, i.e., $\alpha, \beta$, $\sigma_k^2$, to $[0.01, 100]$ with $1$ as their initial values, and set noise variance $\sigma^2_\epsilon$ as $10^{-6}$. $\beta_t^{1/2}$ defined in Eq.~(\ref{eq:LCB_MIQP_obj}) is set as 1.

Datasets QM7 \citep{blum2009QM7,rupp2012QM7} and QM9 \citep{ruddigkeit2012QM9,ramakrishnan2014QM9} are used to test the kernel performance and train GNNs as graph functions. Each dataset consists of molecules with quantum mechanic properties. Following the setting in \citet{zhang2024augmenting}, we represent each molecule as a graph with $F=15$ node features with $L=4$ labels included, use the same structural constraints, and train a GNN as a predictor for each dataset.The trained GNN on QM7 has train and test errors of $0.0356$ and $0.0337$ respectively. Both the train and test errors of the GNN on QM9 are $0.0082$. We provide an example of the node feature and label in such molecular graph to better distinguish the difference in their definitions:

\emph{Example.} In the molecular design example on QM7 dataset, we followed the same setting as in \cite{zhang2024augmenting}. Each node (atom) has one label from $\{C, N, O, S\}$ and a feature vector with length $M=15$, e.g. $(1, 0, 0, 0, 0, 1, 0, 0, 0, 0, 1, 0, 0, 1, 0)$ where the first four elements indicate the atom has label $C$, the $5^{th}-8^{th}$ elements indicate the atom has two neighbors, the $9^{th}-13^{th}$ elements indicate the atom is connected to 2 hydrogen atoms, the $14^{th}$ element indicates the atom is included in a double bond and the $15^{th}$ element indicates it is not included in a triple bond. More detailed definitions can be found in the Table 2 \& 3 of \cite{zhang2024augmenting}.

Based on the molecular size $N$, we consider two settings (a) if the dataset includes molecules of size $N$, we randomly choose molecules from the dataset and use their real properties, and (b) for larger $N$, we use Limeade to generate molecules and use the trained GNN to predict their properties. To show the performance of different kernels on representing similarity between graphs, we apply setting (a) and perform a property prediction task using GPs equipped with the various kernels, shown in Figure \ref{fig:model_compare}. For larger graph sizes, we apply setting (b). The root mean square errors (RMSE) is reported in Figure \ref{fig:violin} and Table \ref{table:model_performance_RMSE}, the mean negative log likelihood (MNLL) is reported in Table \ref{table:model_performance_MNLL}. 

Two observations from these results are: (i) adding feature component largely improves the performance of all kernels in terms of predictive accuracy and uncertainty quantification, and (ii) our shortest-path kernels achieve comparable performance comparing to other graph kennels, which further justifies our choice of these kernels for global acquisition optimization.

\subsection{Ablation studies on the choice of $\beta_t$}\label{app:beta_t}

BoGrape leverages the classical LCB acquisition function, where exploration and exploitation are balanced through its coefficient $\beta_t$ \citep{srinivas2010gaussian}. We set $\beta_t = 1$ in our experiments for simplicity, as using constant values for $\beta_t$ is a standard approach in BO literature, e.g., \citet{thebelt2021entmoot}. Although changing $\beta_t$ does not affect the complexity of the acquisition optimization, we provide an ablation study on varying the value of $\beta_t$ as an investigation on the convergence behavior of BoGrape regarding different exploration-and-exploitation factor. We consider the same setup of the real-world case study on the QM7 dataset using SSP kernel with graph size $N=10$ as in Section \ref{subsec:molecular_design}. We include three common choices of $\beta_t^{1/2}$ in BO literature: (1) $1$ as in this work; (2) $1.96$, e.g. \citet{thebelt2021entmoot}; (3) a time-dependent schedule of $3\cdot\sqrt{0.5\log(2 (t+1))}$ as suggested by recent graph BO literature \citep{ru2021interpretable}. We present the average minimal objective value (with $0.5$ standard deviation in brackets) found over 50 iterations with 10 replications in Figure \ref{fig:beta_t}. This study confirms that, though there may indeed be value in tuning $\beta_t$ for a particular setting, BoGrape variants with different choices for $\beta_t$ largely exhibit similar convergence behavior. This observation justifies the choice on $\beta_t$ and further proves the robustness of BoGrape over key hyperparameters.

\subsection{Details for case study}\label{app:experimental_details}

Random sampling is a common baseline, but is excluded in Section \ref{subsec:molecular_design} since it rarely produces even feasible solutions. Randomly sample feasible graphs is untrivial in molecular generation task because the graph structure and features should be reasonable and compatible with each other, e.g., satisfying structural feasibility, dataset-specific constraints, etc.. Here we consider random sampling over QM7 and QM9, to guarantee the feasibility of samples and compare it with Limeade \citep{zhang2025limeade}. Limeade is proposed as a feasible molecule generator, which is further enhanced by incorporating the composition constraints and symmetry-breaking constraints \citep{zhang2023optimizing}. Figure \ref{fig:sampling_results} plots the regret curve over 50 iterations for both sample methods. In all cases, Limeade outperforms random sampling, showing the limitations of random sampling. Therefore, we choose Limeade as our sampling baseline.

For evolutionary algorithm, we apply the same random sampling and mutation procedure as in \citet{ru2021interpretable}. First, none of $10^6$ random samples is feasible, which is expected since the sample domain is $2^{N(N-1)/2}L^N$ ($\sim 10^{20}$ for $N=10$, $\sim 10^{62}$ for $N=20$), while the feasible domain is relatively much smaller. Then we give evolutionary algorithms $10^4$ feasible molecules generated by Limeade as an initial population and mutate each one 100 times, but only $0.35\%$ mutations are feasible for $N=10$, and $3$ out of $10^6$ are feasible for $N=20$. Although a tailored evolutionary algorithm could be designed with better performance, it is neither the main focus of this work nor compatible with general settings. Therefore, we exclude WL-evol from baselines.

Given the trained GNNs used in Section \ref{app:kernel_performance} as unknown graph functions, we conduct BoGrape to optimize them. In our experiments, 10 random molecules sampled by Limeade are used as the initial dataset, and 50 BO iterations are performed. We set \textrm{PoolSearchMode=2} in Gurobi to generate feasible solutions using Limeade. For each BO run, we show the mean with $0.5$ standard deviation of the best objective value over 10 replications.
For the two baselines where we use Limeade as a sampling-based solver for the acquisition functions, we conduct an ablation study by increasing the number of candidates from $20$ to $100$. Figure \ref{fig:ablation_random} shows the results. While multiplying the number of candidates evaluated by five in each acquisition optimization step indeed improves the performance of the sampling-based baselines, BoGrape (which only proposes one sample per iteration) still outperforms Limeade. This ablation study emphasizes the importance of global acquisition function optimization.
Besides the results shown in Figure \ref{fig:numerical_results}, experimental results for $N\in\{15, 25\}$ are reported in Figure \ref{fig:additional_numerical_results}, supporting our analysis in Section \ref{subsec:molecular_design}.

\begin{figure}
    \centering
    \includegraphics[width=0.9\textwidth]{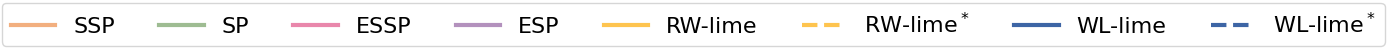}\\
     \vspace{.3mm}
    \begin{subfigure}[b]{0.45\textwidth}
         \centering
         \includegraphics[width=\textwidth]{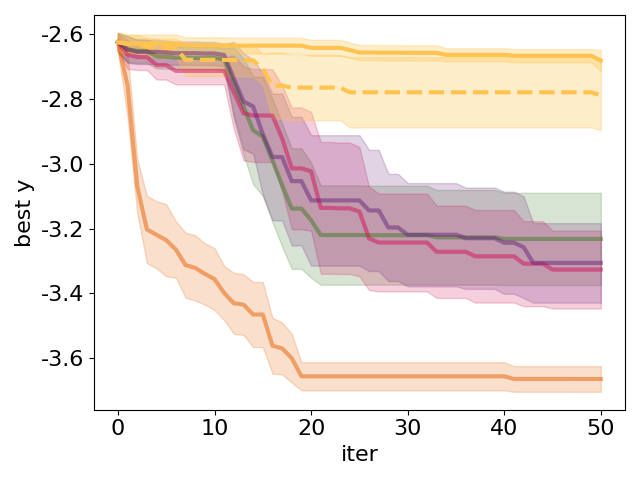}
         \caption{QM7, $N=30$}
         \label{fig:Q7_RW_100}
     \end{subfigure}
     \hfill
     \begin{subfigure}[b]{0.45\textwidth}
         \centering
         \includegraphics[width=\textwidth]{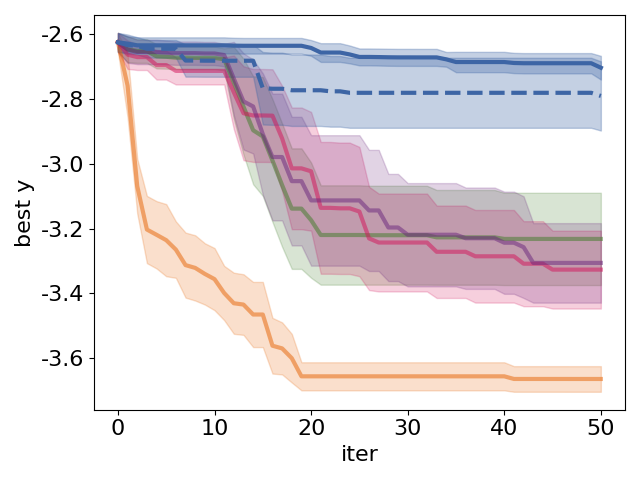}
         \caption{QM7, $N=30$}
         \label{fig:Q7_WL_100}
     \end{subfigure}
     \hfill
     \begin{subfigure}[b]{0.45\textwidth}
         \centering
         \includegraphics[width=\textwidth]{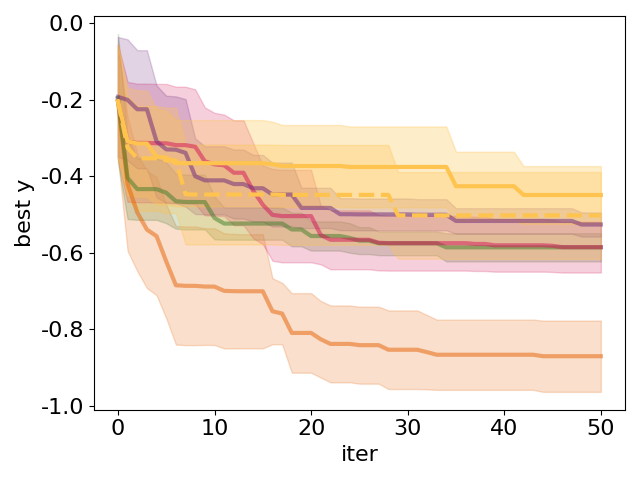}
         \caption{QM9, $N=30$}
         \label{fig:Q9_RW_100}
     \end{subfigure}
     \hfill
     \begin{subfigure}[b]{0.45\textwidth}
         \centering
         \includegraphics[width=\textwidth]{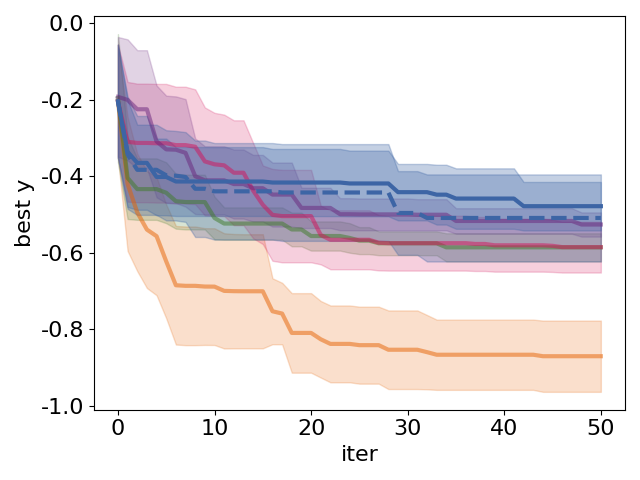}
         \caption{QM9, $N=30$}
         \label{fig:Q9_WL_100}
     \end{subfigure}
    \caption{Performance of varying the number of samples used in acquisition optimization for baselines over QM7 and QM9 datasets with graph size $N=30$. $^*$ indicates $100$ candidates used in each iterations. Best objective is plotted at each iteration. Mean with 0.5 standard deviation over 10 replications is reported.}
    \label{fig:ablation_random}
\end{figure}


\begin{figure}
    \centering
    \includegraphics[width=\textwidth]{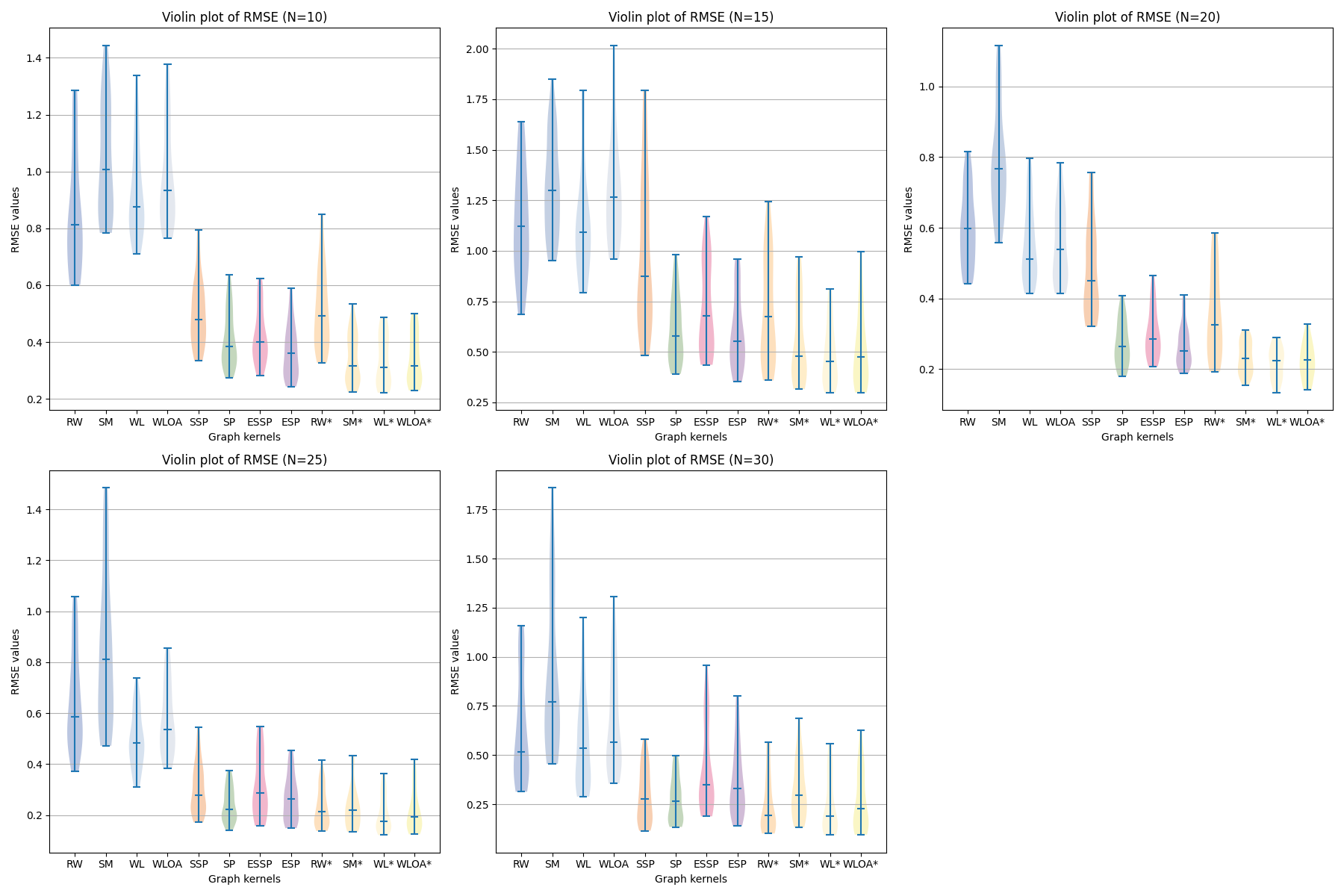}
    \caption{Violin plots to demonstrate GP regression performance on different graph kernels and graph sizes $N$. * indicates linear combination of given kernel and feature kernel. Each violin plots 25\% percentile, median and 75\% percentile of the RMSEs over the 100 replications.}
    \label{fig:violin}
\end{figure}

\begin{landscape}
 
\begin{table}[t]
    \caption{Model performance of GPs equipped with different graph kernels. * indicates linear combination of given kernel and feature kernel. For each graph size $N$, we use Limeade to random generate 20 and 100 molecules for training and testing, respectively, root mean square error (RMSE) of predictive error is reported over 100 replications.}
    \label{table:model_performance_RMSE}
    \begin{center}
    \begin{tabular}{cccccc}
        \toprule
        Kernel & N=10 & N=15 & N=20 & N=25 & N=30\\
        \midrule
        RW & 1.151(0.965) & 1.456(1.074) & 0.819(0.792) & 1.255(2.949) & 1.815(3.911)\\
        SM & 1.301(0.945) & 1.693(1.221) & 1.398(4.284) & 1.678(3.041) & 2.816(6.667)\\
        WL & 1.185(0.894) & 1.475(1.057) & 0.682(0.501) & 0.934(2.085) & 1.389(2.758)\\
        WLOA & 1.256(0.930) & 1.612(1.084) & 0.682(0.508) & 1.002(2.075) & 1.492(2.750)\\
        \midrule
        SSP & 0.710(0.680) & 1.810(2.819) & 0.682(0.683) & 0.638(1.701) & 1.122(3.085)\\
        SP & 0.709(1.150) & 0.785(0.640) & 0.354(0.318) & 0.540(1.602) & 1.073(3.095)\\
        ESSP & 0.619(0.772) & 0.928(0.732) & 0.436(0.550) & 0.766(1.615) & 1.862(4.907)\\
        ESP & 0.539(0.628) & 0.790(0.691) & 0.341(0.304) & 0.757(1.980) & 1.574(4.516)\\
        \midrule
        RW* & 0.826(0.922) & 0.984(0.920) & 0.520(0.613) & 0.523(1.723) & 0.995(2.655)\\
        SM* & 0.519(0.670) & 0.765(0.699) & 0.290(0.301) & 0.593(1.487) & 1.278(3.748)\\
        WL* & 0.498(0.636) & 0.723(0.731) & 0.278(0.286) & 0.562(1.676) & 1.006(2.646)\\
        WLOA* & 0.545(0.710) & 0.866(1.002) & 0.289(0.293) & 0.608(1.702) & 1.049(2.648)\\
        \bottomrule
    \end{tabular}
    \end{center}
\end{table}

    \begin{table}[t]
    \caption{Model performance of GPs equipped with different graph kernels. * indicates linear combination of given kernel and feature kernel. For each graph size $N$, we use Limeade to random generate 20 and 100 molecules for training and testing, respectively, mean negative log likelihood (MNLL) is reported over 100 replications.}
    \label{table:model_performance_MNLL}
    \begin{center}
    \begin{tabular}{cccccc}
        \toprule
        Kernel & N=10 & N=15 & N=20 & N=25 & N=30\\
        \midrule
        RW & NA & NA & NA & NA & NA\\
        SM & 15690.367(70415.316) & 793.739(3287.276) & 438.756(1797.462) & 3381.905(10099.946) & 36426.891(120815.092)\\
        WL & 15688.598(70415.348) & 521.068(2243.726) & 304.246(1337.750) & 2944.284(9372.510) & 19793.178(69333.822)\\
        WLOA & 15688.653(70415.362) & 521.248(2243.769) & 304.281(1337.685) & 2946.160(9372.893) & 19794.528(69333.027)\\
        \midrule
        SSP & 15920.290(25866.469) & 7210.262(12025.188) & 1787.862(3453.177) & 1447.486(4738.234) & 13835.083(84912.369)\\
        SP & 276.745(980.034) & 24.316(83.986) & 56.368(357.663) & 1090.370(4670.817) & 12862.266(84932.399)\\
        ESSP & 53.093(154.109) & 530.289(3421.041) & 40.077(222.703) & 38.819(214.390) & 327.393(2336.968)\\
        ESP & 4.962(17.396) & 1.877(3.650) & 0.658(1.308) & 2.597(7.894) & 248.222(2283.444)\\
        \midrule
        RW* & NA & NA & NA & NA & NA\\
        SM* & 2.863(9.559) & 2.974(9.078) & 0.845(2.916) & 9.611(31.583) & 1438.175(11278.539)\\
        WL* & 2.572(9.418) & 2.764(9.118) & 0.477(2.357) & 3.782(15.643) & 702.551(6853.109)\\
        WLOA* & 3.623(14.137) & 2.737(9.016) & 0.309(2.049) & 3.987(17.698) & 725.048(6974.228)\\
        \bottomrule
    \end{tabular}
    \end{center}
\end{table}   
\end{landscape}

\begin{figure}
    \centering
    \includegraphics[width=0.6\textwidth]{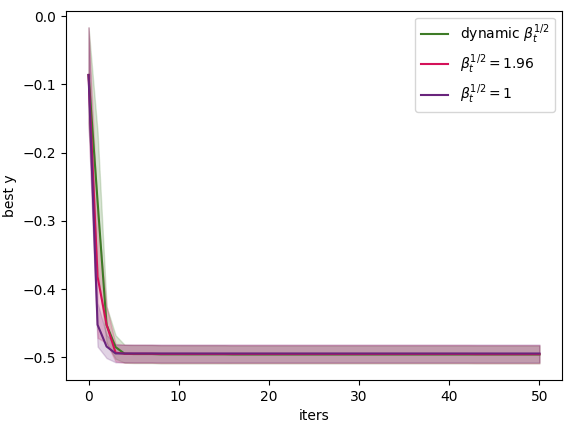}
    \caption{Bayesian optimization results on QM7 with $N=10$ and different values of $\beta_t^{1/2}$. Best objective value is plotted at each iteration. Mean with $0.5$ standard deviation over $10$ replications is reported.}
    \label{fig:beta_t}
\end{figure}

\begin{figure}[ht]
     \centering
     \begin{subfigure}[b]{0.32\textwidth}
         \centering
         \includegraphics[width=\textwidth]{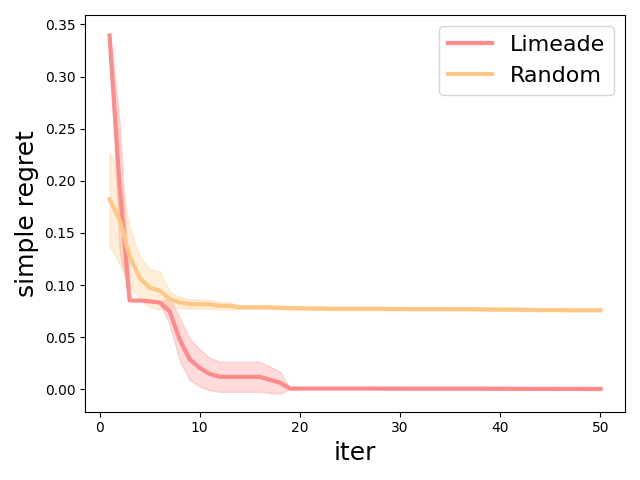}
         \caption{QM7, $N=4$}
         \label{fig:RL_QM7_4}
     \end{subfigure}
     \hfill
     \begin{subfigure}[b]{0.32\textwidth}
         \centering
         \includegraphics[width=\textwidth]{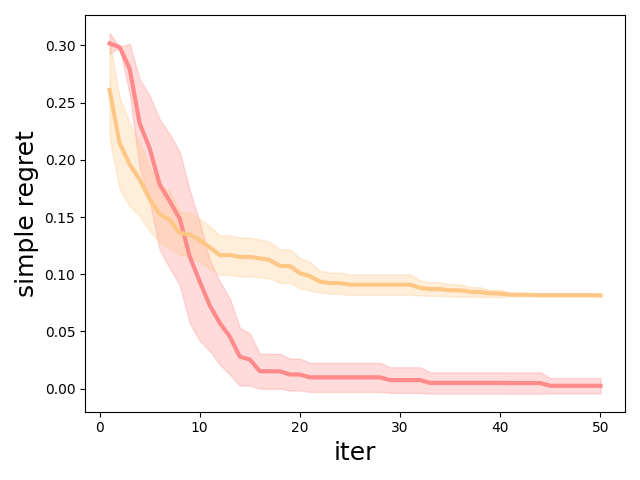}
         \caption{QM7, $N=5$}
         \label{fig:RL_QM7_5}
     \end{subfigure}
     \hfill
     \begin{subfigure}[b]{0.32\textwidth}
         \centering
         \includegraphics[width=\textwidth]{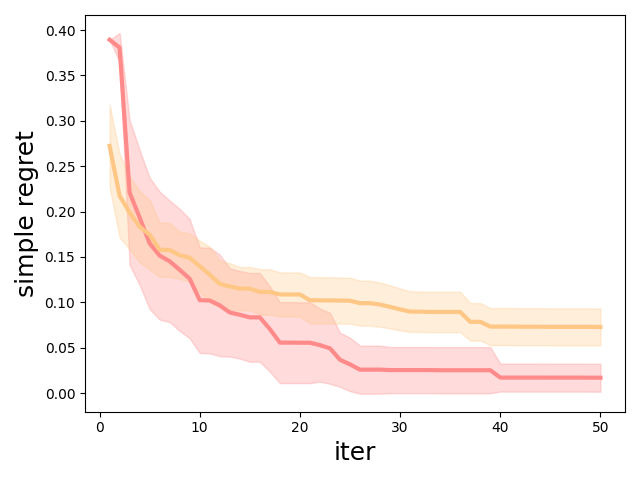}
         \caption{QM7, $N=6$}
         \label{fig:RL_QM7_6}
     \end{subfigure}
    \hfill
    \begin{subfigure}[b]{0.32\textwidth}
         \centering
         \includegraphics[width=\textwidth]{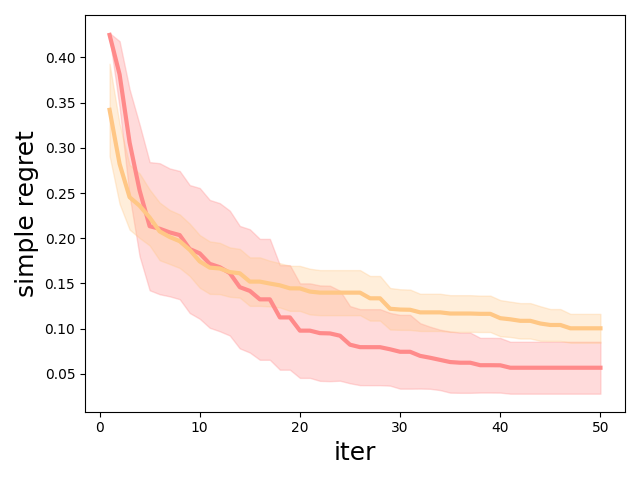}
         \caption{QM7, $N=7$}
         \label{fig:RL_QM7_7}
     \end{subfigure}
     \hfill
     \begin{subfigure}[b]{0.32\textwidth}
         \centering
         \includegraphics[width=\textwidth]{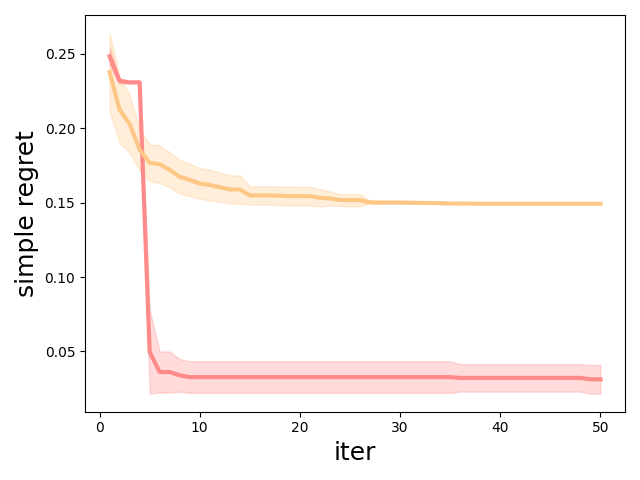}
         \caption{QM9, $N=4$}
         \label{fig:RL_QM9_4}
     \end{subfigure}
     \hfill
     \begin{subfigure}[b]{0.32\textwidth}
         \centering
         \includegraphics[width=\textwidth]{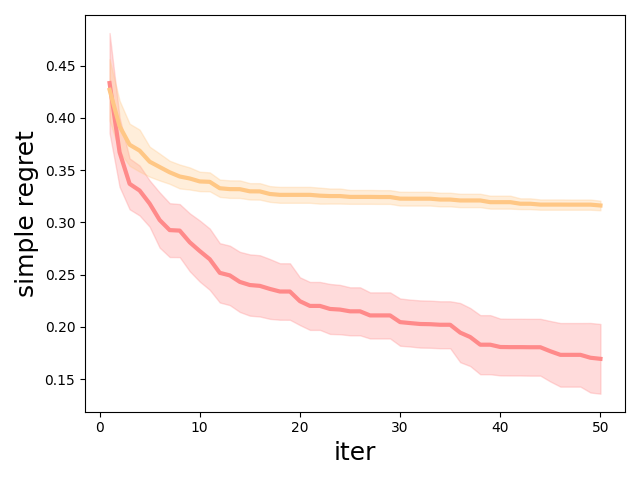}
         \caption{QM9, $N=5$}
         \label{fig:RL_QM9_5}
     \end{subfigure}
    \hfill
    \begin{subfigure}[b]{0.32\textwidth}
         \centering
         \includegraphics[width=\textwidth]{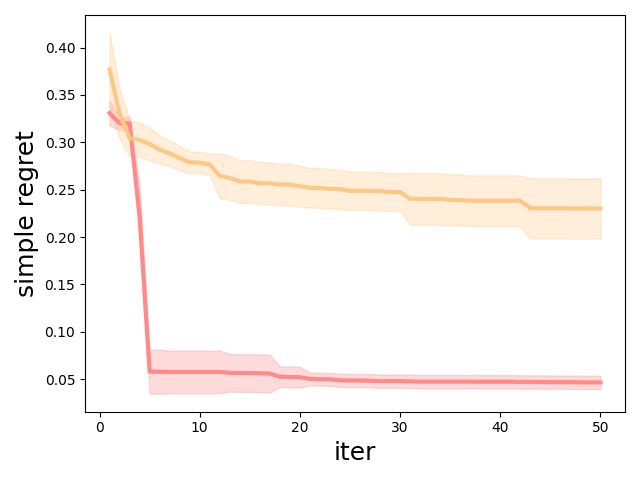}
         \caption{QM9, $N=6$}
         \label{fig:RL_QM9_6}
     \end{subfigure}
     \hfill
     \begin{subfigure}[b]{0.32\textwidth}
         \centering
         \includegraphics[width=\textwidth]{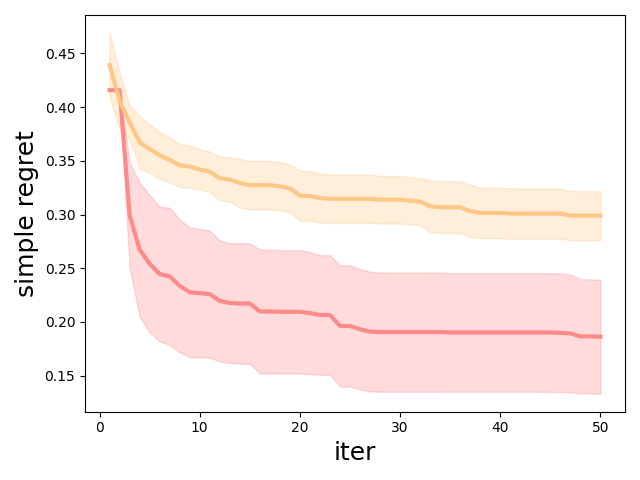}
         \caption{QM9, $N=7$}
         \label{fig:RL_QM9_7}
     \end{subfigure}
     \hfill
     \begin{subfigure}[b]{0.32\textwidth}
         \centering
         \includegraphics[width=\textwidth]{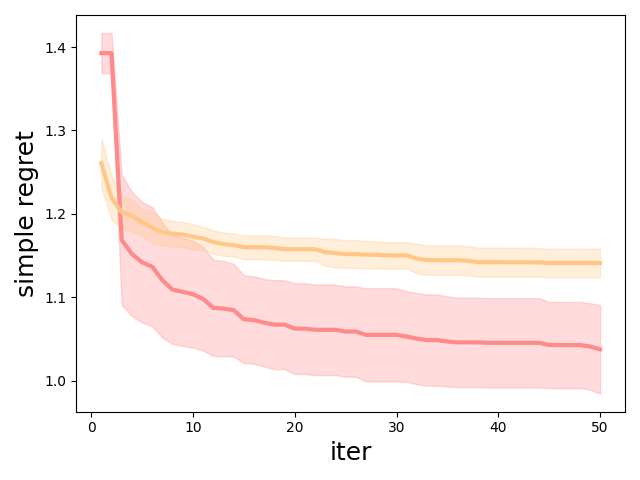}
         \caption{QM9, $N=8$}
         \label{fig:RL_QM9_8}
     \end{subfigure}
    \caption{Performance of random sampling and Limeade over QM7 and QM9 datasets with different graph size $N$. Simple regret is plotted at each iteration. Mean with 0.5 standard deviation over 10 replications is reported.}
    \label{fig:sampling_results}
\end{figure}

\begin{figure}
     \centering
     \includegraphics[width=0.9\textwidth]{figures/Legend_qm.png}\\
     \vspace{.3mm}
     \begin{subfigure}[b]{0.45\textwidth}
         \centering
         \includegraphics[width=\textwidth]{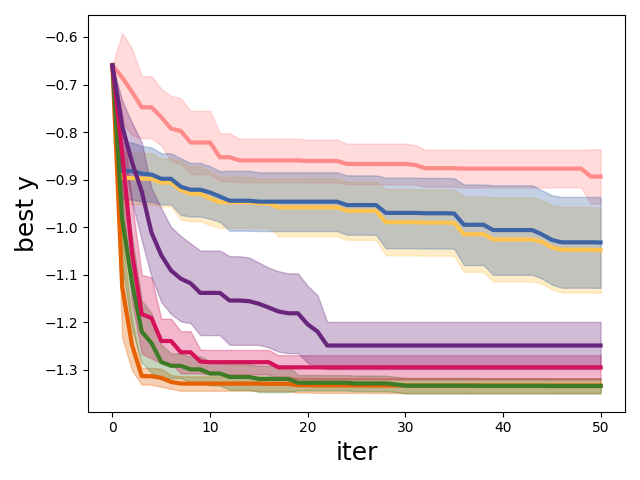}
         \caption{QM7, $N=15$}
         \label{fig:QM7_15}
     \end{subfigure}
    \hfill
     \begin{subfigure}[b]{0.45\textwidth}
         \centering
         \includegraphics[width=\textwidth]{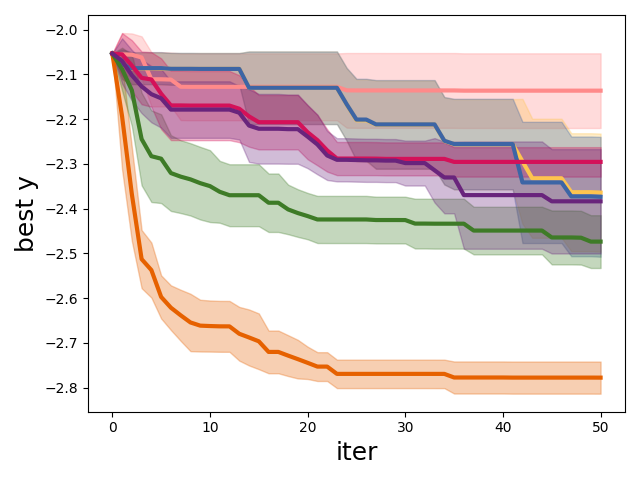}
         \caption{QM7, $N=25$}
         \label{fig:QM7_25}
     \end{subfigure}
     \hfill
     \begin{subfigure}[b]{0.45\textwidth}
         \centering
         \includegraphics[width=\textwidth]{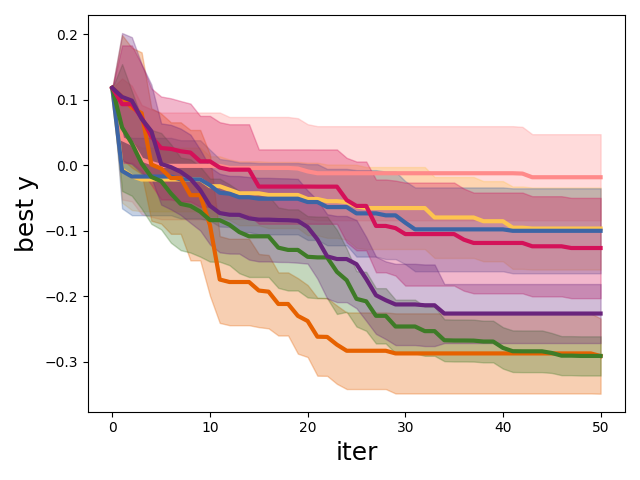}
         \caption{QM9, $N=15$}
         \label{fig:QM9_15}
     \end{subfigure}
     \hfill
     \begin{subfigure}[b]{0.45\textwidth}
         \centering
         \includegraphics[width=\textwidth]{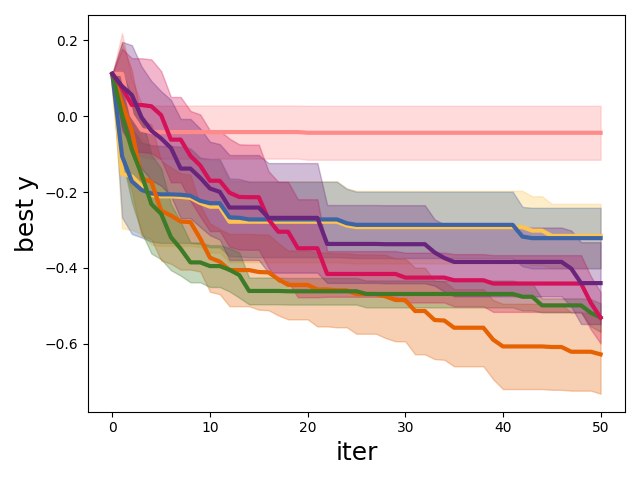}
         \caption{QM9, $N=25$}
         \label{fig:QM9_25}
     \end{subfigure}
    \caption{Bayesian optimization results on QM7 and QM9 with $N\in\{15,25\}$. Best objective value is plotted at each iteration. Mean with $0.5$ standard deviation over 10 replications is reported. }
    \label{fig:additional_numerical_results}
\end{figure}

\end{document}